\documentclass[12pt]{amsproc}
\usepackage{geometry}
\geometry{
a4paper,
total={170mm,257mm},
left=30mm,
right=30mm,
top=30mm,
bottom=25mm,
}

\usepackage{graphicx}
\usepackage{amsthm}
\usepackage{amssymb}

\theoremstyle{plain}
\newtheorem{thm}{Theorem}[section]
\newtheorem{cor}[thm]{Corollary}
\newtheorem{lem}[thm]{Lemma}
\newtheorem{prop}[thm]{Proposition}
\newtheorem{defn}[thm]{Definition}
\newtheorem{exa}[thm]{Example}
\newtheorem{rem}[thm]{Remark}

\begin{document}

\title{On Graded $1$-Absorbing Prime Submodules}

\author{Ahmad \textsc{Ka'abneh}}
\address{Department of Mathematics, University of Jordan, Amman, Jordan}
\email{ahmadkabeya@yahoo.com}

\author{Rashid \textsc{Abu-Dawwas}}
\address{Department of Mathematics, Yarmouk University, Irbid, Jordan}
\email{rrashid@yu.edu.jo}

\subjclass[2020]{Primary 16W50; Secondary 13A02}

\keywords{Graded $1$-absorbing prime ideal; graded $1$-absorbing prime submodule; graded prime submodule.}

\begin{abstract}
Let $G$ be a group with identity $e$, $R$ be a commutative $G$-graded ring with unity $1$ and $M$ be a $G$-graded unital $R$-module. In this article, we introduce the concept of graded $1$-absorbing prime submodule. A proper graded $R$-submodule $N$ of $M$ is said to be a graded $1$-absorbing prime $R$-submodule of $M$ if for all non-unit homogeneous elements $x, y$ of $R$ and homogeneous element $m$ of $M$ with $xym\in N$, either $xy\in (N :_{R} M)$ or $m\in N$. We show that the new concept is a generalization of graded prime submodules at the same time it is a special graded $2$-absorbing submodule. Several properties of a graded $1$-absorbing prime submodule have been obtained. We investigate graded $1$-absorbing prime submodules when the components $\left\{M_{g}:g\in G\right\}$ are multiplication $R_{e}$-modules.
\end{abstract}

\maketitle

\section{Introduction}

Throughout this article, $G$ will be a group with identity $e$ and $R$ a commutative ring with nonzero unity $1$. Then $R$ is said to be $G$-graded if $R=\displaystyle\bigoplus_{g\in G} R_{g}$ with $R_{g}R_{h}\subseteq R_{gh}$ for all $g, h\in G$ where $R_{g}$ is an additive subgroup of $R$ for all $g\in G$. The elements of $R_{g}$ are called homogeneous of degree $g$. If $x\in R$, then $x$ can be written uniquely as $\displaystyle\sum_{g\in G}x_{g}$, where $x_{g}$ is the component of $x$ in $R_{g}$. Moreover, $R_e$ is a subring of $R$ and $1\in R_e$ and $h(R)=\displaystyle\bigcup_{g\in G}R_{g}$. Let $I$ be an ideal of a graded ring $R$. Then $I$ is said to be graded ideal if $I=\displaystyle\bigoplus_{g\in G}(I\cap R_{g})$, i.e., for $x\in I$, $x=\displaystyle\sum_{g\in G}x_{g}$ where $x_{g}\in I$ for all $g\in G$. An ideal of a graded ring need not be graded.

Assume that $M$ is a left unital $R$-module. Then $M$ is said to be $G$-graded if $M=\displaystyle\bigoplus_{g\in G}M_{g}$ with $R_{g}M_{h}\subseteq M_{gh}$ for all $g,h\in G$ where $M_{g}$ is an additive subgroup of $M$ for all $g\in G$. The elements of $M_{g}$ are called homogeneous of degree $g$. It is clear that $M_{g}$ is an $R_{e}$-submodule of $M$ for all $g\in G$. If $x\in M$, then $x$ can be written uniquely as $\displaystyle\sum_{g\in G}x_{g}$, where $x_{g}$ is the component of $x$ in $M_{g}$. Moreover, $h(M)=\displaystyle\bigcup_{g\in G}M_{g}$. Let $N$ be an $R$-submodule of a graded $R$-module $M$. Then $N$ is said to be graded $R$-submodule if $N=\displaystyle\bigoplus_{g\in G}(N\cap M_{g})$, i.e., for $x\in N$, $x=\displaystyle\sum_{g\in G}x_{g}$ where $x_{g}\in N$ for all $g\in G$. An $R$-submodule of a graded $R$-module need not be graded.

For more details and terminology, see \cite{Hazart} and \cite{Nastasescue}.

\begin{lem}\label{1}(\cite{Farzalipour}, Lemma 2.1)\label{1.3} Let $R$ be a $G$-graded ring and $M$ be a $G$-graded $R$-module.

\begin{enumerate}

\item If $I$ and $J$ are graded ideals of $R$, then $I+J$ and $I\bigcap J$ are graded ideals of $R$.

\item If $N$ and $K$ are graded $R$-submodules of $M$, then $N+K$ and $N\bigcap K$ are graded $R$-submodules of $M$.

\item If $N$ is a graded $R$-submodule of $M$, $r\in h(R)$, $x\in h(M)$ and $I$ is a graded ideal of $R$, then $Rx$, $IN$ and $rN$ are graded $R$-submodules of $M$. Moreover, $(N:_{R}M)=\left\{r\in R:rM\subseteq N\right\}$ is a graded ideal of $R$.
\end{enumerate}
\end{lem}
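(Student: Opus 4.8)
The plan is to reduce every assertion to a single working criterion: a submodule (or ideal) $L$ is graded precisely when, for each $x\in L$ with homogeneous decomposition $x=\sum_{g\in G}x_g$, every component $x_g$ again lies in $L$. I would first record this reformulation, which is immediate from the definition $L=\bigoplus_g(L\cap M_g)$, together with two structural facts I will use repeatedly: the product of a homogeneous element of $R$ and a homogeneous element of $M$ is homogeneous, since $R_gM_h\subseteq M_{gh}$; and, because $G$ is a group, right translation $g\mapsto gh$ is a bijection of $G$, so for fixed $h$ the degrees $\{gh:g\in G\}$ are pairwise distinct.

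For parts (1) and (2) I would argue by direct component chasing. Given $x\in I+J$, write $x=a+b$ with $a\in I$ and $b\in J$; since $I$ and $J$ are graded, $a_g\in I$ and $b_g\in J$, whence the $g$-component $a_g+b_g$ of $x$ lies in $I+J$. For $I\cap J$, an element $x$ lies in both $I$ and $J$, so gradedness of each forces every $x_g$ into both, hence into $I\cap J$. The submodule statements in (2) are the very same computation with $M$ in place of $R$, so I would simply note that the proofs transfer verbatim.

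The heart of part (3) is the auxiliary observation that any submodule generated by homogeneous elements is graded: if $x=\sum_i r_is_i$ with each $s_i$ homogeneous, then expanding $r_i=\sum_g(r_i)_g$ writes $x$ as a sum of homogeneous elements $(r_i)_gs_i$, and collecting those of a fixed degree exhibits each homogeneous component of $x$ as an element of the generated submodule. I would then apply this three times: $Rx$ with $x\in h(M)$ is generated by the single homogeneous element $x$; for $IN$, since $I$ and $N$ are graded, $IN$ is generated by the products $a_gm_h$ with $a_g\in I\cap R_g$ and $m_h\in N\cap M_h$, each of which is homogeneous; and $rN$ with $r\in h(R)$ equals $(Rr)N$, the product of the graded ideal $Rr$ by the graded submodule $N$, so it falls under the $IN$ case (alternatively one chases the components of $rm$ directly).

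The remaining, and slightly more delicate, claim is that $(N:_R M)$ is a graded ideal. After checking it is an ideal, I would take $r\in(N:_R M)$ and show each $r_g\in(N:_R M)$, i.e.\ $r_gM\subseteq N$. It suffices to test on homogeneous $m\in M_h$, since a general element of $M$ is a finite sum of such and $N$ is additively closed. For homogeneous $m$ the equality $rm=\sum_g r_gm$ with $r_gm\in M_{gh}$ is exactly the homogeneous decomposition of $rm$, because the degrees $gh$ are distinct; as $rm\in N$ and $N$ is graded, every component $r_gm$ lies in $N$. I expect this last step to be the main obstacle, in that it is the one place where the group structure of $G$ is genuinely needed: the injectivity of $g\mapsto gh$ is precisely what guarantees $r_gm$ is a single homogeneous component of $rm$, and this is what lets gradedness of $N$ be pushed back to gradedness of $(N:_R M)$.
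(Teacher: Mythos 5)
The paper does not prove this lemma at all --- it is imported verbatim from \cite{Farzalipour} (Lemma 2.1) as a known fact --- so there is no in-paper argument to compare against. Your proof is correct and is the standard one: the reformulation of gradedness via homogeneous components, the component chase for sums and intersections, the observation that a submodule generated by homogeneous elements is graded (covering $Rx$, $IN$, and $rN$), and the $(N:_RM)$ argument all go through. You also correctly isolate the one genuinely non-trivial point, namely that for $m\in M_h$ the expression $rm=\sum_g r_gm$ really is the homogeneous decomposition of $rm$ because $g\mapsto gh$ is injective in the group $G$; without that, one could only conclude that certain sums of the $r_gm$ lie in $N$, not each term individually. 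The only items left implicit are routine: that $(N:_RM)$ is an ideal in the first place, and that $I\cap J$ (being an intersection, not generated by anything in sight) needs the direct component argument you give rather than the generation lemma --- both of which you handle or could handle in one line.
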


Similarly, if $M$ is a graded $R$-module, $N$ a graded $R$-submodule of $M$ and $m\in h(M)$, then $(N:_{R}m)$ is a graded ideal of $R$. Also, in particular, $Ann_{R}(M)=(0_{M}:_{R}M)$ is a graded ideal of $R$.

The concept of graded prime ideals and its generalizations have a significant place in graded commutative algebra since they are used in understanding
the structure of graded rings. Recall that a proper graded ideal $P$ of $R$ is said to be graded prime ideal if $x, y\in h(R)$ such that $xy\in P$ implies $x\in P$ or $y\in P$ (\cite{Refai Hailat Obiedat}). Graded prime ideals have been extended to graded modules in \cite{Atani}. A proper graded $R$-submodule $N$ of $M$ is said to be a graded prime submodule if whenever $x\in h(R)$ and $m\in h(M)$ with $xm\in N$, then $m\in N$ or $x\in (N:_{R}M)$. The notion of graded $2$-absorbing ideal, which is a generalization of graded prime ideal, was introduced in \cite{Zoubi Dawwas Ceken} as the following: a proper graded ideal $P$ of $R$ is said to be graded $2$-absorbing if whenever $x, y, z\in h(R)$ such that $xyz\in P$, then either $xy\in P$ or $xz\in P$ or $yz\in P$. Also, Graded $2$-absorbing ideals have been deeply studied in \cite{Naghani Moghimi}. Graded $2$-absorbing submodules have been introduced and studied in \cite{Zoubi Dawwas}. A proper graded $R$-submodule $N$ of a graded $R$-module $M$ is said to be graded $2$-absorbing if whenever $x, y\in h(R)$ and $m\in h(M)$ such that $xym\in N$, then either $xm\in N$ or $ym\in N$ or $xy\in (N:_{R}M)$. Graded $2$-absorbing submodules have been generalized into graded $n$-absorbing submodules in \cite{Hamoda}. Actually, the concept of graded $2$-absorbing submodule is a generalization of graded prime submodules.

Recently, in \cite{Dawwas Tekir}, a new class of graded ideals has been introduced, which is an intermediate class of graded ideals between graded prime ideals and graded $2$-absorbing ideals. A proper graded ideal $P$ of $R$ is said to be a graded $1$-absorbing prime ideal if for each non-units $x, y, z\in h(R)$ with $xyz\in P$, then either $xy\in P$ or $z\in P$. Clearly, every graded prime ideal of $R$ is graded $1$-absorbing prime and every graded $1$-absorbing prime ideal of $R$ is graded $2$-absorbing. The next example shows that not every graded $2$-absorbing ideal of $R$ is graded $1$-absorbing prime.

\begin{exa}\cite{Dawwas Tekir} Consider $R=\mathbb{Z}+3x\mathbb{Z}[x]$ and $G=\mathbb{Z}$. Then $R$ is $G$-graded by $R_{0}=\mathbb{Z}$, $R_{j}=3\mathbb{Z}x^{j}$ for $j\geq1$ and $R_{j}=\{0\}$ otherwise. Now, $P=3x\mathbb{Z}[x]$ is a graded ideal of $R$, so by Lemma \ref{1}, $P^{2}$ is a graded ideal of $R$. By (\cite{Yassine}, Example 2.2), $P^{2}$ is a $2$-absorbing ideal of $R$, and then it is graded $2$-absorbing. On the other hand, $P^{2}$ is not graded $1$-absorbing prime as $3, 9, 3x^{2}\in h(R)$ such that $(3)(9)(3x^{2})\in P^{2}$ while $(3)(9)\notin P^{2}$ and $3x^{2}\notin P^{2}$.
\end{exa}

Thus we have the following chain:
\begin{center}
\textit{graded prime ideals} $\Rrightarrow$ \textit{graded $1$-absorbing prime ideals} $\Rrightarrow$ \textit{graded $2$-absorbing ideals}.
\end{center}
On the other hand, we have another chain:
\begin{center}
\textit{graded prime submodules} $\Rrightarrow$ \textit{graded $2$-absorbing submodules}.
\end{center}
Thus we realize that there is a missing part in the second chain, which is between graded prime submodules and graded $2$-absorbing submodules. Then we
define the missing part of the chain as graded $1$-absorbing prime submodules. In this article, we are motivated by \cite{Ugurlu} to introduce and study the concept of graded $1$-absorbing prime submodule. A proper graded $R$-submodule $N$ of $M$ is said to be a graded $1$-absorbing prime $R$-submodule of $M$ if for all non-unit elements $x, y\in h(R)$ and $m\in h(M)$ with $xym\in N$, either $xy\in (N :_{R} M)$ or $m\in N$. We show that the new concept is a generalization of graded prime submodules at the same time it is a special graded $2$-absorbing submodule. Several properties of a graded $1$-absorbing prime submodule have been obtained. We investigate graded $1$-absorbing prime submodules when the components $\left\{M_{g}:g\in G\right\}$ are multiplication $R_{e}$-modules.

\section{Graded $1$-Absorbing Prime Submodules}

In this section, we introduce and study the concept of graded $1$-absorbing prime submodules.

\begin{defn} Let $M$ be a graded $R$-module and $N$ be a proper graded $R$-submodule of $M$. Then $N$ is said to be a graded $1$-absorbing prime $R$-submodule of $M$ if for all non-unit elements $x, y\in h(R)$ and $m\in h(M)$ with $xym\in N$, either $xy\in (N :_{R} M)$ or $m\in N$.
\end{defn}

\begin{exa}\label{Example 1}Let $R$ be a graded local ring with graded maximal ideal $X$ satisfies $X^{2}=\{0\}$. If $M$ is a graded $R$-module, then every proper graded $R$-submodule of $M$ is graded $1$-absorbing prime. To prove that, let $N$ be a proper graded $R$-submodule of $M$, $x, y\in h(R)$ be non-units and $m\in h(M)$ such that $xym\in N$. Since $xy\in X^{2}=\{0\}$, $xy\in (N:_{R}M)$, and hence $N$ is a graded $1$-absorbing prime $R$-submodule of $M$.
\end{exa}

\begin{prop}\label{Proposition 1}Let $M$ be a graded $R$-module and $N$ be a graded $R$-submodule of $M$.
\begin{enumerate}
\item If $N$ is a graded prime $R$-submodule of $M$, then $N$ is a graded $1$-absorbing prime $R$-submodule of $M$.
\item If $N$ is a graded $1$-absorbing prime $R$-submodule of $M$, then $N$ is a graded $2$-absorbing $R$-submodule of $M$.
\end{enumerate}
\end{prop}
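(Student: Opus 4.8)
The plan is to prove each implication by reducing its hypotheses to the defining condition of the target notion, using throughout that a product of homogeneous elements is again homogeneous and that a submodule absorbs multiplication by arbitrary ring elements.

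For part (1), suppose $N$ is graded prime and take non-unit $x,y\in h(R)$ and $m\in h(M)$ with $xym\in N$. The first key step is to observe that $ym\in h(M)$, so applying the graded prime condition to the homogeneous pair $x$ and $ym$ yields either $x\in(N:_{R}M)$ or $ym\in N$. In the former case I would multiply through by $y$ to get $xyM\subseteq yN\subseteq N$, hence $xy\in(N:_{R}M)$. In the latter case I would apply the graded prime condition a second time, now to $y$ and $m$, obtaining either $m\in N$ (and we are done) or $y\in(N:_{R}M)$, from which $xyM\subseteq xN\subseteq N$ again gives $xy\in(N:_{R}M)$. Note that this argument never uses that $x,y$ are non-units, so graded primeness is genuinely the stronger hypothesis.

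For part (2), suppose $N$ is graded $1$-absorbing prime and take $x,y\in h(R)$ and $m\in h(M)$ with $xym\in N$; the goal is to show $xm\in N$ or $ym\in N$ or $xy\in(N:_{R}M)$. The natural case division is on whether $x$ and $y$ are units. When both are non-units the definition applies directly: either $xy\in(N:_{R}M)$, or $m\in N$, and in the latter case $xm\in N$ since $N$ is a submodule.

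The step I expect to be the real obstacle—or at least the one easiest to overlook—is the unit case, since the $1$-absorbing prime hypothesis says nothing about products involving a unit. Here I would exploit that $N$ is closed under scalar multiplication by arbitrary elements of $R$: if $x$ is a unit, then $ym=x^{-1}(xym)\in N$, and symmetrically if $y$ is a unit then $xm=y^{-1}(xym)\in N$, using the commutativity of $R$. Thus one of the three required memberships holds in every case, which completes the proof.
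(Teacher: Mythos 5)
Your proposal is correct. Part (2) is essentially identical to the paper's argument: the same case split on whether $x$ or $y$ is a unit, with the unit case handled by multiplying $xym\in N$ by the inverse, and the non-unit case handled directly by the graded $1$-absorbing prime hypothesis. For part (1) you take a slightly longer route than the paper: you apply graded primeness twice, first to the homogeneous pair $x$ and $ym$ and then to $y$ and $m$, using in the intermediate cases that $(N:_{R}M)$ forces $xyM\subseteq N$. The paper instead observes that $z=xy$ is itself a homogeneous element of $R$ (since $h(R)$ is closed under products) and applies the graded prime condition once to the pair $(z,m)$, which yields $xy\in(N:_{R}M)$ or $m\in N$ in a single step. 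Both arguments are valid; the paper's is shorter, while yours has the minor virtue of making explicit that the non-unit hypothesis on $x,y$ is never needed, so nothing is lost.
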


\begin{proof}
\begin{enumerate}
\item Let $x, y\in h(R)$ be non-units and $m\in h(M)$ such that $xym\in N$. Then $z=xy\in h(R)$ with $zm\in N$. Since $N$ is graded prime, either $z=xy\in (N:_{R}M)$ or $m\in N$. Hence, $N$ is a graded $1$-absorbing prime $R$-submodule of $M$.
\item Let $x, y\in h(R)$ and $m\in h(M)$ such that $xym\in N$. If $x$ is unit, then $ym\in N$. If $y$ is unit, then $xm\in N$. Suppose that $x$ and $y$ are non-units. Since $N$ is graded $1$-absorbing prime, either $xy\in (N:_{R}M)$ or $m\in N$, as needed. Hence, $N$ is a graded $2$-absorbing $R$-submodule of $M$.
\end{enumerate}
\end{proof}

The next example shows that the converse of Proposition \ref{Proposition 1} (1) is not true in general.

\begin{exa}\label{Example 2}Consider $R=\mathbb{Z}_{4}$, $M=\mathbb{Z}_{4}[x]$ and $G=\mathbb{Z}$. Then $R$ is $G$-graded by $R_{0}=\mathbb{Z}_{4}$ and $R_{j}=\{0\}$ otherwise. Also, $M$ is $G$-graded by $M_{j}=\mathbb{Z}_{4}x^{j}$ for $j\geq0$ and $M_{j}=\{0\}$ otherwise. As $x\in h(M)$, $N=\langle x\rangle$ is a graded $R$-submodule of $M$. By Example \ref{Example 1}, $N$ is a graded $1$-absorbing prime $R$-submodule of $M$. On the other hand, $N$ is not graded prime $R$-submodule of $M$.
\end{exa}

The next example shows that the converse of Proposition \ref{Proposition 1} (2) is not true in general.

\begin{exa}\label{Example 3}Consider $R=\mathbb{Z}$, $M=\mathbb{Z}_{30}[i]$ and $G=\mathbb{Z}_{2}$. Then $R$ is $G$-graded by $R_{0}=\mathbb{Z}$ and $R_{1}=\{0\}$. Also, $M$ is $G$ graded by $M_{0}=\mathbb{Z}_{30}$ and $M_{1}=i\mathbb{Z}_{30}$. Then $N=\langle\overline{6}\rangle$ is a graded $2$-absorbing $R$-submodule of $M$. On the other hand, $2\in h(R)$ is non-unit and $\overline{3}\in h(M)$ with $(2)(2)(\overline{3})\in N$, but $(2)(2)\notin (N:_{R}M)$ and $\overline{3}\notin N$. Hence, $N$ is not graded $1$-absorbing prime $R$-submodule of $M$.
\end{exa}

So, we have the following chain:
\begin{center}
\textit{graded prime submodules} $\Rrightarrow$ \textit{graded $1$-absorbing prime submodules} $\Rrightarrow$ \textit{graded $2$-absorbing submodules}.
\end{center}
And the converse of each implication is not true in general.

\begin{prop}\label{Proposition 2 (1)}Let $M$ be a graded $R$-module and $N$ be a graded $R$-submodule of $M$. If $N$ is a graded $1$-absorbing prime $R$-submodule of $M$, then $(N:_{R}M)$ is a graded $1$-absorbing prime ideal of $R$.
\end{prop}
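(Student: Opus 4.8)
The plan is to verify the two requirements in the definition of a graded $1$-absorbing prime ideal for the ideal $I = (N :_R M)$: that it is a proper graded ideal, and that the absorbing condition holds. Gradedness of $I$ is immediate from Lemma~\ref{1}(3), and properness follows because if $1 \in I$ then $M = 1\cdot M \subseteq N$, contradicting that $N$ is a proper submodule.

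For the main condition I would take non-unit homogeneous elements $x, y, z \in h(R)$ with $xyz \in I$ and aim to show $xy \in I$ or $z \in I$. I would dispose of the second alternative by assuming $z \notin I$ and forcing $xy \in I$. Since $xyz \in I$ means $xyz\,M \subseteq N$, every product of the form $xy(zm) = (xyz)m$ with $m \in M$ lies in $N$; the idea is to feed a suitable homogeneous element of the shape $zm$ into the submodule hypothesis.

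The key step is producing a homogeneous witness. Because $z \notin I$, we have $zM \not\subseteq N$, so there is some $v \in M$ with $zv \notin N$. Writing $v = \sum_{h} v_h$ with $v_h \in M_h$ (possible since $M$ is graded) gives $zv = \sum_h z v_h$, and if every summand $z v_h$ lay in $N$ then so would their sum $zv$, as $N$ is a submodule. Hence some $m := v_h \in h(M)$ satisfies $zm \notin N$; moreover $zm \in h(M)$ because $z \in h(R)$ and $m \in h(M)$.

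Finally, with the non-units $x, y \in h(R)$ and the homogeneous element $zm \in h(M)$ satisfying $xy(zm) = (xyz)m \in N$, the definition of a graded $1$-absorbing prime submodule forces $xy \in (N:_R M) = I$ or $zm \in N$; the latter is excluded by the choice of $m$, so $xy \in I$, as required. I expect the only real obstacle to be this homogeneous-witness extraction: one must not simply pick an arbitrary element of $M$, but instead exploit the direct-sum decomposition of $M$ to obtain a genuinely homogeneous $m$, since the submodule hypothesis applies only to homogeneous module elements.
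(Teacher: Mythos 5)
Your proof is correct and uses the same key idea as the paper: regrouping $xyzm_g$ as $xy\cdot(zm_g)$ with $zm_g$ a homogeneous element of $M$, then applying the defining condition of $N$ to the pair $x,y$. The paper argues directly (for every $m\in M$ and every component $m_g$, either $xy\in(N:_R M)$ or $zm_g\in N$, and summing gives $z\in(N:_R M)$), whereas you run the contrapositive by extracting a single homogeneous witness with $zm\notin N$; this is only a cosmetic difference, and your handling of properness and the witness extraction is cleaner than the paper's.
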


\begin{proof}Let $x, y, z\in h(R)$ be non-units such that $xyz\in (N:_{R}M)$. Assume that $m\in M$. Then $xyzm_{g}\in N$ for all $g\in G$. Since $N$ is graded $1$-absorbing prime, either $xy\in (N:_{R}M)$ or $zm_{g}\in N$ for all $g\in G$. If $zm_{g}\in N$ for all $g\in G$, then $zm=z\left(\displaystyle\sum_{g\in G}m_{g}\right)=\displaystyle\sum_{g\in G}zm_{g}\in N$, which implies that $z\in (N:_{R}M)$. Hence, $(N:_{R}M)$ is a graded $1$-absorbing prime ideal of $R$.
\end{proof}

Similarly, one can prove the following:

\begin{prop}\label{Proposition 2 (2)}Let $M$ be a graded $R$-module and $N$ be a graded $R$-submodule of $M$. If $N$ is a graded $1$-absorbing prime $R$-submodule of $M$, then $(N:_{R}m)$ is a graded $1$-absorbing prime ideal of $R$ for all $m\in h(M)-N$.
\end{prop}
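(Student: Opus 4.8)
The plan is to follow exactly the pattern of the proof of Proposition \ref{Proposition 2 (1)}, the only new twist being that the ring element $z$ must be absorbed into the module element to produce a homogeneous element of $M$. First I would record that $(N:_{R}m)$ is a graded ideal of $R$ by the remark following Lemma \ref{1}, and that it is proper: since $m\in h(M)-N$ we have $1\cdot m=m\notin N$, so $1\notin(N:_{R}m)$. This verifies the standing hypotheses on the object we claim is graded $1$-absorbing prime.

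Next, to check the defining condition, I would take non-unit homogeneous elements $x,y,z\in h(R)$ with $xyz\in(N:_{R}m)$; unwinding the colon, this says $xyzm\in N$. The key step is to set $m'=zm$. Because $z\in h(R)$ and $m\in h(M)$, the product $m'=zm$ is again a homogeneous element of $M$, so the triple $(x,y,m')$ is exactly of the form to which the definition of a graded $1$-absorbing prime submodule applies: $x,y$ are non-unit homogeneous elements of $R$, $m'\in h(M)$, and $xym'=xyzm\in N$. Applying that definition to $N$ yields either $xy\in(N:_{R}M)$ or $m'=zm\in N$.

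Finally I would translate each alternative back into a statement about $(N:_{R}m)$. In the first case, since $(N:_{R}M)\subseteq(N:_{R}m)$ (any $r$ with $rM\subseteq N$ satisfies $rm\in N$), we get $xy\in(N:_{R}m)$. In the second case, $zm\in N$ is precisely $z\in(N:_{R}m)$. Thus for every such triple either $xy\in(N:_{R}m)$ or $z\in(N:_{R}m)$, which is the definition of a graded $1$-absorbing prime ideal. I do not expect a genuine obstacle here; the only points requiring care are the observation that $zm$ is homogeneous, so that the submodule condition is actually applicable, and the properness of $(N:_{R}m)$, which is exactly where the hypothesis $m\notin N$ is used.
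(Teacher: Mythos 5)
Your proof is correct and is essentially the argument the paper intends when it says the result follows ``similarly'' to Proposition \ref{Proposition 2 (1)}: the only adaptation needed is to view $zm$ as the homogeneous module element (here no decomposition into components is required since $m$ is already homogeneous), and you handle the properness of $(N:_{R}m)$ and the inclusion $(N:_{R}M)\subseteq(N:_{R}m)$ correctly.
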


The next example shows that the converse of Proposition \ref{Proposition 2 (1)} is not true in general.

\begin{exa}\label{Example 4}Consider $R=\mathbb{Z}$, $M=\mathbb{Z}\times\mathbb{Z}$ and $G=\mathbb{Z}_{2}$. Then $R$ is $G$-graded by $R_{0}=\mathbb{Z}$ and $R_{1}=\{0\}$. Also, $M$ is $G$-graded by $M_{0}=\mathbb{Z}\times\{0\}$ and $M_{1}=\{0\}\times\mathbb{Z}$. As $(3, 0)\in h(M)$, $N=\langle(3, 0)\rangle$ is a graded $R$-submodule of $M$ such that $(N:_{R}M)=\{0\}$ is a graded prime ideal of $R$, so $(N:_{R}M)$ is a graded $1$-absorbing prime ideal of $R$. On the other hand, $3, 2\in h(R)$ are non-units and $(1, 0)\in h(M)$ with $(3)(2)(1, 0)\in N$, but $(3)(2)\notin (N:_{R}M)$ and $(1, 0)\notin N$. Hence, $N$ is not graded $1$-absorbing prime $R$-submodule of $M$.
\end{exa}

\begin{lem}\label{Lemma 1}Let $M$ be a graded $R$-module and $N$ be a graded $1$-absorbing prime $R$-submodule of $M$. For a graded $R$-submodule $L$ of $M$ and for non-unit elements $x, y\in h(R)$, if $xyL\subseteq N$, then either $xy\in (N :_{R} M)$ or $L\subseteq N$.
\end{lem}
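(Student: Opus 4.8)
The plan is to prove the contrapositive of the disjunction: I assume that $xy \notin (N :_R M)$ and deduce $L \subseteq N$. The guiding observation is that $L$, being a graded $R$-submodule, satisfies $L = \bigoplus_{g \in G}(L \cap M_g)$, so every element of $L$ is a finite sum of homogeneous elements each lying in $L$. Since $N$ is an $R$-submodule and hence closed under addition, it therefore suffices to show that each homogeneous element of $L$ lies in $N$.

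First I would fix an arbitrary homogeneous element $\ell \in L \cap M_g$ for some $g \in G$; note that $\ell \in L \cap M_g \subseteq M_g \subseteq h(M)$, so $\ell$ is a homogeneous element of $M$, which is exactly what is needed to invoke the definition. Because $\ell \in L$ and $xyL \subseteq N$, we have $xy\ell \in N$. Now $x, y$ are non-unit homogeneous elements of $R$ and $\ell \in h(M)$, so the defining property of a graded $1$-absorbing prime submodule applies directly to the membership $xy\ell \in N$, yielding $xy \in (N :_R M)$ or $\ell \in N$. Under the standing assumption $xy \notin (N :_R M)$, the first alternative is excluded, forcing $\ell \in N$.

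Finally, since the homogeneous element $\ell$ was arbitrary, every homogeneous component of every element of $L$ lies in $N$. Using the grading $L = \bigoplus_{g \in G}(L \cap M_g)$ together with the closure of $N$ under addition, I conclude that $L \subseteq N$, which completes the argument.

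I do not expect a genuine obstacle here: the substantive content is entirely supplied by the definition of a graded $1$-absorbing prime submodule, applied to the single homogeneous element $\ell$, which is why the proof is short. The only point requiring care is the reduction from the submodule $L$ to its homogeneous elements, and this is precisely where the hypothesis that $L$ is \emph{graded} is used; without it one could not pass from $xy\ell \in N$ for homogeneous $\ell$ to $L \subseteq N$.
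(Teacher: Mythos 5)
Your proof is correct and is essentially the paper's argument: both reduce to a single homogeneous element of $L$ (using that $L$ is graded) and apply the definition of graded $1$-absorbing prime to it; you merely phrase it as ``assume $xy\notin (N:_{R}M)$ and show every homogeneous element of $L$ lies in $N$,'' while the paper assumes $L\nsubseteq N$, extracts a homogeneous component $m_{g}\in L\setminus N$, and derives $xy\in (N:_{R}M)$.
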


\begin{proof}Assume that $L\nsubseteq N$. Then there is an element $0\neq m\in L-N$, and then there exists $g\in G$ such that $m_{g}\notin N$. Note that, $m_{g}\in L$ as $L$ is a graded $R$-submodule. By assumption, we have $xym_{g}\in N$. Since $N$ is a graded $1$-absorbing prime $R$-submodule, either $xy\in (N :_{R} M)$ or $m_{g}\in N$. The second choice implies a contradiction, we conclude that $xy\in (N :_{R} M)$, as desired.
\end{proof}

\begin{thm}\label{Theorem 1}Let $M$ be a graded $R$-module and $N$ be a proper graded $R$-submodule of $M$. Then $N$ is a graded $1$-absorbing prime $R$-submodule of $M$ if and only if whenever $I$, $J$ are proper graded ideals of $R$ and $L$ is a graded $R$-submodule of $M$ such that $IJL\subseteq N$, then either $IJ\subseteq (N :_{R} M)$ or $L\subseteq N$.
\end{thm}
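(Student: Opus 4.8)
The plan is to prove both implications, with the genuine content living in the ``only if'' direction, which I will reduce to the single-element statement already established in Lemma \ref{Lemma 1}.

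For the ``if'' direction I would assume the ideal--submodule condition and verify the definition directly. Given non-unit elements $x,y\in h(R)$ and $m\in h(M)$ with $xym\in N$, I would set $I=Rx$, $J=Ry$ and $L=Rm$. By Lemma \ref{1}(3) these are graded (the first two are ideals, the last a submodule), and $I,J$ are \emph{proper} precisely because $x,y$ are non-units, since $Rx=R$ if and only if $x$ is a unit. As $xym\in N$ and $N$ is an $R$-submodule, we get $IJL=Rxym\subseteq N$, so the hypothesis yields $IJ\subseteq(N:_{R}M)$ or $L\subseteq N$; since $xy\in Rxy=IJ$ and $m\in Rm=L$, this is exactly $xy\in(N:_{R}M)$ or $m\in N$, as required.

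For the ``only if'' direction I would assume $N$ is graded $1$-absorbing prime and take proper graded ideals $I,J$ and a graded submodule $L$ with $IJL\subseteq N$. Assuming $L\nsubseteq N$, the goal is $IJ\subseteq(N:_{R}M)$. The key observation is that, because $I$ and $J$ are graded, it suffices to prove $ab\in(N:_{R}M)$ for every pair of homogeneous elements $a\in I\cap h(R)$ and $b\in J\cap h(R)$: decomposing into homogeneous components shows that every element of $IJ$ is a finite sum of such products, and $(N:_{R}M)$ is an ideal (indeed graded, by Lemma \ref{1}(3)), hence closed under these sums. Now fix such $a,b$. Since $I,J$ are proper, $a$ and $b$ are non-units (a homogeneous element of a proper ideal cannot be a unit), and $abL\subseteq IJL\subseteq N$. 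Applying Lemma \ref{Lemma 1} to the graded submodule $L$ and the non-unit homogeneous elements $a,b$ gives $ab\in(N:_{R}M)$ or $L\subseteq N$; the latter is excluded by assumption, so $ab\in(N:_{R}M)$. Summing over the homogeneous generators then yields $IJ\subseteq(N:_{R}M)$.

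The gradedness claims used throughout are all supplied by Lemma \ref{1}, so they are routine. The one point requiring a little care is the reduction in the ``only if'' direction: I must confirm that products of homogeneous elements of $I$ and $J$ really generate all of $IJ$, and that properness of $I,J$ forces the chosen homogeneous elements to be non-units so that Lemma \ref{Lemma 1} applies. Once these are checked, the theorem is an immediate consequence of Lemma \ref{Lemma 1}, which carries the actual $1$-absorbing content.
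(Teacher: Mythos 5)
Your proposal is correct and follows essentially the same route as the paper: the converse direction uses the principal graded ideals $\langle x\rangle$, $\langle y\rangle$ and submodule $\langle m\rangle$, and the forward direction reduces to homogeneous elements of $I$ and $J$ (non-units by properness) and invokes Lemma \ref{Lemma 1}. The only difference is cosmetic: you argue directly that every homogeneous product lands in $(N:_{R}M)$ and sum up, while the paper argues by contrapositive, extracting a single homogeneous product $x_{g}y_{h}\notin(N:_{R}M)$ and concluding $L\subseteq N$.
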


\begin{proof}Suppose that $N$ is a graded $1$-absorbing prime $R$-submodule of $M$. Assume that $I$, $J$ are proper graded ideals of $R$ and $L$ is a graded $R$-submodule of $M$ such that $IJL\subseteq N$. Suppose that $IJ\nsubseteq(N:_{R}M)$. Then there exist $x\in I$ and $y\in J$ such that $xy\notin (N:_{R}M)$, and then there exist $g, h\in G$ such that $x_{g}y_{h}\notin (N:_{R}M)$. Note that, $x_{g}\in I$ and $y_{h}\in J$ as $I$ and $J$ are graded ideals. Since $I$ and $J$ are proper, $x_{g}$ and $y_{h}$ are non-units. Now, $x_{g}y_{h}L\subseteq N$, so by Lemma \ref{Lemma 1}, $L\subseteq N$, as needed. Conversely, let $x, y\in h(R)$ be non-units and $m\in h(M)$ such that $xym\in N$. Then $I=\langle x\rangle$, $J=\langle y\rangle$ are proper graded ideals of $R$ and $L=\langle m\rangle$ is a graded $R$-submodule of $M$ such that $IJL\subseteq N$. By assumption, either $IJ\subseteq(N:_{R}M)$ or $L\subseteq N$. If $IJ\subseteq(N:_{R}M)$, then $xy\in(N:_{R}M)$. Hence, $N$ is a graded $1$-absorbing prime $R$-submodule of $M$.
\end{proof}

Let $M$ be a $G$-graded $R$-module, $N$ be a graded $R$-submodule of $M$ and $g\in G$ such that $N_{g}\neq M_{g}$. In \cite{Atani}, $N$ is said to be a $g$-prime $R$-submodule of $M$ if whenever $r\in R_{e}$ and $m\in M_{g}$ such that $rm\in N$, then either $r\in (N:_{R}M)$ or $m\in N$. Also, in \cite{Dawwas Tekir}, A graded ideal $P$ of $R$ is said to be a $g$-$1$-absorbing prime ideal of $R$ if $P_{g}\neq R_{g}$ and whenever non-unit elements $x, y, z$ in
$R_{g}$ such that $xyz\in P$, then $xy\in P$ or $z\in P$. We introduce the following definition:

\begin{defn} Let $M$ be a $G$-graded $R$-module, $N$ be a graded $R$-submodule of $M$ and $g\in G$ such that $N_{g}\neq M_{g}$. Then $N$ is said to be a $g$- $1$-absorbing prime $R$-submodule of $M$ if for all non-unit elements $x, y\in R_{e}$ and $m\in M_{g}$ with $xym\in N$, either $xy\in (N :_{R} M)$ or $m\in N$.
\end{defn}

\begin{rem}
Clearly, every $g$-prime $R$-submodule is $g$-$1$-absorbing prime. However, the converse is not true in general; $N=\langle x\rangle$ in Example \ref{Example 2} is a graded $1$-absorbing prime $R$-submodule, so it is a $g$-$1$-absorbing prime $R$-submodule for all $g\in G=\mathbb{Z}$, but if we choose $g=1$, then $N$ will be not graded $g$-prime $R$-submodule. Now, we are going to prove that if $R_{e}$ is not local ring, then a graded $R$-submodule will be an $e$-$1$-absorbing prime if and only if it is an $e$-prime $R$-submodule.
\end{rem}

\begin{lem}\label{2}Let $M$ be a graded $R$-module. If $M$ has an $e$-$1$-absorbing prime $R$-submodule that is not an $e$-prime $R$-submodule, then the sum of every non-unit element of $R_{e}$ and every unit element of $R_{e}$ is a unit element of $R_{e}$.
\end{lem}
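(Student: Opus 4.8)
The plan is to argue by contradiction, using a single fixed witness to the failure of $e$-primeness together with the additive structure of $(N :_R M)$. First I would unpack the hypothesis: since the $e$-$1$-absorbing prime submodule $N$ is \emph{not} $e$-prime, there must exist $r \in R_e$ and $m \in M_e$ with $rm \in N$ but $r \notin (N :_R M)$ and $m \notin N$. A preliminary observation is that this $r$ is necessarily a non-unit, for if $r$ were a unit then its inverse would again be homogeneous of degree $e$, and $m = r^{-1}(rm) \in N$ would contradict $m \notin N$. This isolates a concrete non-unit $r$ with $rm \in N$ and $r \notin (N :_R M)$, which serves as the pivot of the entire argument.

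Next, to establish the stated closure property, I would fix an arbitrary non-unit $a \in R_e$ and an arbitrary unit $u \in R_e$ and assume, for contradiction, that $a + u$ is also a non-unit. The key manoeuvre is to feed the single relation $rm \in N$ into the $1$-absorbing hypothesis twice, against two different non-unit multipliers. Since $(a+u)(rm) \in N$ with $a+u$ and $r$ non-units and $m \in M_e$, the definition forces $(a+u)r \in (N :_R M)$ or $m \in N$; as $m \notin N$, we conclude $(a+u)r \in (N :_R M)$. Running the identical argument with the non-unit $a$ in place of $a+u$ yields $ar \in (N :_R M)$.

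The contradiction then comes from subtraction. Because $(N :_R M)$ is a graded ideal (Lemma \ref{1}), it is closed under differences, so $ur = (a+u)r - ar \in (N :_R M)$; multiplying by $u^{-1} \in R_e$ gives $r = u^{-1}(ur) \in (N :_R M)$, contradicting the choice of $r$. Hence $a + u$ must be a unit, which is exactly the asserted claim.

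I expect the only genuinely delicate point to be the bookkeeping about units: one must verify that a unit of $R$ lying in $R_e$ has its inverse in $R_e$, so that $r^{-1}$ and $u^{-1}$ are legitimate degree-$e$ multipliers, and that the notion of ``non-unit'' invoked in the $1$-absorbing hypothesis is the same throughout. The algebraic core — multiplying $rm \in N$ by both $a+u$ and $a$ and cancelling — is short once one notices that the difference of the two resulting memberships isolates $ur$, and therefore $r$ itself, inside $(N :_R M)$.
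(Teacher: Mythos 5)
Your proof is correct and follows essentially the same route as the paper: apply the $e$-$1$-absorbing hypothesis to $arm\in N$ and to $(a+u)rm\in N$, then subtract to force $ur$, and hence $r$, into $(N:_R M)$. The only difference is that you explicitly justify that the witness $r$ is a non-unit, a small point the paper's proof asserts without comment.
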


\begin{proof}Let $N$ be an $e$-$1$-absorbing prime $R$-submodule of $M$ that is not an $e$-prime $R$-submodule. Then there exist a non-unit $r\in R_{e}$ and $m\in M_{g}$ such that $rm\in N$ but $r\notin (N :_{R} M)$ and $m\notin N$. Choose a non-unit element $a\in R_{e}$. Then we have that $ram\in N$ and $m\notin N$. Since $N$ is $e$-$1$-absorbing prime, $ra\in (N :_{R} M)$. Let $u\in R_{e}$ be a unit element. Assume that $a + u$ is non-unit. Then $r(a + u)m\in N$. As $N$ is $e$-$1$-absorbing prime, $r(a + u)\in (N :_{R} M)$. This means that $ru\in (N :_{R} M)$, i.e., $r\in (N :_{R} M)$, which is a contradiction. Thus, we have $a+u$ is a unit element.
\end{proof}

\begin{thm}\label{Theorem 2}Let $M$ be a graded $R$-module. If $M$ has an $e$-$1$-absorbing prime $R$-submodule that is not an $e$-prime $R$-submodule, then $R_{e}$ is a local ring.
\end{thm}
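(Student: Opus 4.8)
The plan is to deduce the conclusion directly from Lemma \ref{2} via the standard characterisation of local rings: a commutative ring with identity is local if and only if its set of non-units is closed under addition, in which case that set is the unique maximal ideal. So the goal reduces to proving that the sum of two non-units of $R_e$ is again a non-unit.

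Since $M$ is assumed to possess an $e$-$1$-absorbing prime $R$-submodule that is not $e$-prime, Lemma \ref{2} applies to $R_e$ and tells us that the sum of any non-unit and any unit of $R_e$ is a unit. The main step is then an argument by contradiction: suppose $x, y \in R_e$ are both non-units but $x + y$ is a unit. I would first observe that $-y$ is also a non-unit, since otherwise $y = (-1)(-y)$ would force $y$ to be a unit. Rewriting $x = (x + y) + (-y)$ exhibits $x$ as the sum of the unit $x + y$ and the non-unit $-y$, so Lemma \ref{2} forces $x$ to be a unit, contradicting the choice of $x$. Hence no such pair $x, y$ exists, and the sum of two non-units is always a non-unit.

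To finish, let $\mathfrak{m}$ denote the set of non-units of $R_e$. Then $0 \in \mathfrak{m}$ (as $1 \neq 0$), $\mathfrak{m}$ is closed under multiplication by arbitrary elements of $R_e$ (if $ra$ were a unit for a non-unit $a$, then $a$ would already be a unit), and by the previous paragraph $\mathfrak{m}$ is closed under addition; thus $\mathfrak{m}$ is a proper ideal of $R_e$. Since every proper ideal of $R_e$ consists of non-units and is therefore contained in $\mathfrak{m}$, it follows that $\mathfrak{m}$ is the unique maximal ideal, so $R_e$ is local.

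The argument is short once Lemma \ref{2} is in hand, so I do not expect a genuine obstacle; the only point needing care is the algebraic rearrangement $x = (x + y) - y$ together with the observation that negation preserves being a non-unit, which is precisely what lets Lemma \ref{2} be invoked to produce the contradiction.
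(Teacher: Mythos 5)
Your proof is correct and follows the same route as the paper: both reduce the theorem to Lemma \ref{2} and then invoke the fact that a commutative ring in which non-unit plus unit is always a unit must be local. The only difference is that the paper cites this last step as Lemma 1 of Badawi and Celikel, whereas you supply the short self-contained argument (the rearrangement $x=(x+y)+(-y)$ and the verification that the non-units form an ideal), which is exactly the content of that cited lemma.
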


\begin{proof}By Lemma \ref{2}, the sum of every non-unit element of $R_{e}$ and every unit element of $R_{e}$ is a unit element of $R_{e}$, and then by
(\cite{Badawi Celikel}, Lemma 1), $R_{e}$ is a local ring.
\end{proof}

\begin{cor}\label{Corollary 1}Let $R$ be a graded ring such that $R_{e}$ is not local ring. Suppose that $M$ is a graded $R$-module. Then a graded $R$-submodule $N$ of $M$ is an $e$-$1$-absorbing prime $R$-submodule if and only if $N$ is an $e$-prime $R$-submodule of $M$.
\end{cor}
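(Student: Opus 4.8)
The plan is to prove both implications directly from the two results already in place, since neither direction needs any fresh computation. The reverse implication is immediate: by the Remark preceding Lemma \ref{2}, every $g$-prime $R$-submodule is $g$-$1$-absorbing prime, and taking $g=e$ shows that any $e$-prime $R$-submodule is automatically $e$-$1$-absorbing prime. Crucially, this half holds regardless of whether $R_e$ is local, so I would dispose of it in a single sentence and make clear that the hypothesis on $R_e$ plays no role here.

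For the forward implication I would argue by contraposition against Theorem \ref{Theorem 2}. Suppose $N$ is an $e$-$1$-absorbing prime $R$-submodule of $M$, and assume toward a contradiction that $N$ is \emph{not} an $e$-prime $R$-submodule. Then $M$ possesses an $e$-$1$-absorbing prime $R$-submodule, namely $N$ itself, that fails to be $e$-prime, which is precisely the hypothesis of Theorem \ref{Theorem 2}. That theorem then forces $R_e$ to be a local ring, contradicting the standing assumption that $R_e$ is not local. Hence no $e$-$1$-absorbing prime submodule can fail to be $e$-prime, and the two notions coincide.

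I do not anticipate any genuine obstacle: all the substance already sits in Lemma \ref{2} and Theorem \ref{Theorem 2}, where the criterion of (\cite{Badawi Celikel}, Lemma 1)---that a commutative ring is local iff the sum of a non-unit and a unit is always a unit---does the heavy lifting. The only points to watch are to phrase the forward direction as a clean contrapositive so that the hypothesis ``$R_e$ not local'' is invoked exactly once, and to keep the reverse direction entirely independent of that hypothesis, thereby exhibiting the corollary as the expected specialization of the general implications to the homogeneous component of degree $e$.
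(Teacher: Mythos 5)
Your proposal is correct and matches the paper's intended argument exactly: the corollary is stated without a separate proof precisely because the forward direction is the contrapositive of Theorem \ref{Theorem 2} and the reverse direction is the observation (recorded in the preceding Remark) that every $e$-prime submodule is $e$-$1$-absorbing prime. Nothing further is needed.
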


\begin{prop}\label{Proposition 3 (1)}Let $\left\{N_{k}\right\}_{k\in\Delta}$ be a chain of graded $1$-absorbing prime $R$-submodules of $M$. Then $\displaystyle\bigcap_{k\in\Delta}N_{k}$ is a graded $1$-absorbing prime $R$-submodule of $M$.
\end{prop}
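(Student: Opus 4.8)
The plan is to verify the defining condition of graded $1$-absorbing primeness directly for $N=\bigcap_{k\in\Delta}N_k$. First I would record that $N$ is a proper graded $R$-submodule of $M$: an arbitrary intersection of graded submodules is again graded (the componentwise criterion behind Lemma \ref{1}(2) passes to arbitrary intersections, since $x\in N$ iff every homogeneous component of $x$ lies in every $N_k$, iff every homogeneous component lies in $N$), and $N$ is proper because $N\subseteq N_{k}\subsetneq M$ for any fixed $k$. Then I would take non-unit elements $x,y\in h(R)$ and $m\in h(M)$ with $xym\in N$, assume $m\notin N$, and aim to prove $xy\in(N:_R M)$, i.e.\ $xyM\subseteq N_k$ for every $k\in\Delta$.

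The first substantive step is to locate one index that witnesses $m\notin N$. Since $N=\bigcap_k N_k$ and $m\notin N$, there is some $k_0\in\Delta$ with $m\notin N_{k_0}$. Because $xym\in N\subseteq N_{k_0}$ and $N_{k_0}$ is graded $1$-absorbing prime, the definition forces $xy\in(N_{k_0}:_R M)$, that is, $xyM\subseteq N_{k_0}$.

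The main work, and the step I expect to be the genuine obstacle, is upgrading $xyM\subseteq N_{k_0}$ (which only places $xyM$ inside one member of the chain) to $xyM\subseteq N_k$ for every $k$; the inclusion $N\subseteq N_{k_0}$ makes the single containment insufficient by itself. Here I would exploit that $\{N_k\}$ is a chain and compare each $N_k$ with $N_{k_0}$. If $N_{k_0}\subseteq N_k$, then immediately $xyM\subseteq N_{k_0}\subseteq N_k$. If instead $N_k\subseteq N_{k_0}$, then $m\notin N_k$ (otherwise $m\in N_k\subseteq N_{k_0}$, contradicting the choice of $k_0$); since $xym\in N\subseteq N_k$ and $N_k$ is graded $1$-absorbing prime, I again obtain $xy\in(N_k:_R M)$, i.e.\ $xyM\subseteq N_k$. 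Totality of the order guarantees that one of the two cases always applies.

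Finally, having shown $xyM\subseteq N_k$ for all $k\in\Delta$, I would conclude $xyM\subseteq\bigcap_{k\in\Delta}N_k=N$, hence $xy\in(N:_R M)$. This establishes that whenever $xym\in N$ with $x,y$ non-units in $h(R)$ and $m\in h(M)$ with $m\notin N$ we have $xy\in(N:_R M)$, which is exactly the graded $1$-absorbing prime condition for $N$.
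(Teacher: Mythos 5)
Your proposal is correct and follows essentially the same argument as the paper: locate an index $k_0$ with $m\notin N_{k_0}$, deduce $xy\in(N_{k_0}:_R M)$, and then use the chain condition to split into the two cases $N_{k_0}\subseteq N_k$ and $N_k\subseteq N_{k_0}$, applying the $1$-absorbing prime property of $N_k$ again in the second case. Your explicit verification that the intersection is a proper graded submodule is a small addition the paper leaves implicit, but the core reasoning is identical.
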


\begin{proof}Let $x, y\in h(R)$ be non-units and $m\in h(M)$ such that $xym\in \displaystyle\bigcap_{k\in\Delta}N_{k}$. Suppose that $m\notin \displaystyle\bigcap_{k\in\Delta}N_{k}$. Then $m\notin N_{i}$ for some $i\in\Delta$. Since $xym\in N_{i}$ and $N_{i}$ is graded $1$-absorbing prime, $xy\in (N_{i}:_{R}M)$. For any $k\in \Delta$, if $N_{i}\subseteq N_{k}$, then $xy\in (N_{k}:_{R}M)$, and then $xy\in \left(\displaystyle\bigcap_{k\in\Delta}N_{k}:_{R}M\right)$. If $N_{k}\subseteq N_{i}$, then $m\notin N_{k}$. Since $xym\in N_{k}$ and $N_{k}$ is graded $1$-absorbing prime, $xy\in (N_{k}:_{R}M)$, and then $xy\in \left(\displaystyle\bigcap_{k\in\Delta}N_{k}:_{R}M\right)$. Hence, $\displaystyle\bigcap_{k\in\Delta}N_{k}$ is a graded $1$-absorbing prime $R$-submodule of $M$.
\end{proof}

\begin{lem}\label{3}Let $M$ be a graded $R$-module. If $M$ is finitely generated, then the union of every chain of proper graded $R$-submodules of $M$ is a proper graded $R$-submodule of $M$.
\end{lem}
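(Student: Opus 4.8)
The plan is to establish, in turn, that $N:=\displaystyle\bigcup_{k\in\Delta}N_{k}$ is an $R$-submodule of $M$, that it is graded, and finally that it is proper; only the last step will use the hypothesis that $M$ is finitely generated.

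First I would check that $N$ is an $R$-submodule. The one closure property that needs the chain hypothesis is additivity: given $a,b\in N$, say $a\in N_{i}$ and $b\in N_{j}$, the chain condition forces $N_{i}\subseteq N_{j}$ or $N_{j}\subseteq N_{i}$, so both $a$ and $b$ lie in a single member $N_{k}$ of the chain, whence $a+b\in N_{k}\subseteq N$. Closure under negatives and under the $R$-action is immediate, since each $N_{k}$ is already a submodule. For gradedness, take any $x\in N$; then $x\in N_{i}$ for some $i$, and since $N_{i}$ is a graded $R$-submodule every homogeneous component $x_{g}$ of $x$ lies in $N_{i}\subseteq N$. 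Thus $N=\displaystyle\bigoplus_{g\in G}(N\cap M_{g})$, so $N$ is a graded $R$-submodule of $M$.

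The heart of the argument, and the only place where finite generation is indispensable, is properness. Because $M$ is a finitely generated graded module, it is generated by finitely many \emph{homogeneous} elements $m_{1},\dots,m_{n}$ (decompose any finite generating set into its homogeneous components and collect them). Suppose, toward a contradiction, that $N=M$. Then each $m_{t}\in N$, so $m_{t}\in N_{k_{t}}$ for some index $k_{t}$, $t=1,\dots,n$. Since the $N_{k}$ form a chain and only the finitely many indices $k_{1},\dots,k_{n}$ are involved, one of these members, say $N_{k_{s}}$, contains all the others; hence $m_{1},\dots,m_{n}\in N_{k_{s}}$. But $m_{1},\dots,m_{n}$ generate $M$, so $N_{k_{s}}=M$, contradicting the assumption that $N_{k_{s}}$ is a proper submodule. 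Therefore $N\neq M$, and $N$ is a proper graded $R$-submodule of $M$.

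I expect the main obstacle to be precisely this properness step: it is where finite generation cannot be dispensed with, and the key device is to pass from the (possibly infinite) chain to the finite subfamily indexed by the generators and then use the total ordering to locate a single member of the chain that already contains a full generating set.
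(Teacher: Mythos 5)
Your proof is correct and the essential step --- passing to the finitely many chain members containing the generators and using the total ordering to find one that contains them all, hence equals $M$ --- is exactly the argument the paper gives. You are somewhat more thorough than the paper (which takes for granted that the union of a chain of graded submodules is a graded submodule, and does not bother to make the generators homogeneous, which is indeed unnecessary here), but the approach is the same.
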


\begin{proof}Suppose that $M=\langle x_{1}, x_{2},..., x_{n}\rangle$ for some $x_{1},..., x_{n}\in M$. Let $\left\{N_{k}\right\}_{k\in\Delta}$ be a chain of proper graded $R$-submodules of $M$. If $\displaystyle\bigcup_{k\in \Delta}N_{k}=M$, then $x_{1}, x_{2}\in \displaystyle\bigcup_{k\in \Delta}N_{k}$, and then $x_{1}\in N_{j}$ and $x_{2}\in N_{r}$ for some $j, r\in \Delta$. If $N_{j}\subseteq N_{r}$, then $x_{1}, x_{2}\in N_{r}$. If $N_{r}\subseteq N_{j}$, then $x_{1}, x_{2}\in N_{j}$. In fact, this works for all $x_{i}$'s. So, there exists $i\in \Delta$ such that $N_{i}$ contains all the generators $x_{1},...,x_{n}$ of $M$, and then $N_{i}=M$, which is a contradiction.
\end{proof}

\begin{prop}\label{Proposition 3 (2)}Let $\left\{N_{k}\right\}_{k\in\Delta}$ be a chain of graded $1$-absorbing prime $R$-submodules of $M$. If $M$ is finitely generated, then $\displaystyle\bigcup_{k\in\Delta}N_{k}$ is a graded $1$-absorbing prime $R$-submodule of $M$.
\end{prop}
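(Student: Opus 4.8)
The plan is to denote $N=\displaystyle\bigcup_{k\in\Delta}N_{k}$ and to establish the result in two stages: first show that $N$ is a proper graded $R$-submodule of $M$, and then verify the defining $1$-absorbing prime condition directly. The crucial role of the finite generation hypothesis is confined to the first stage; once properness and gradedness are in hand, the verification of the absorbing condition is immediate from the chain structure, since a single product $xym$ cannot be spread across the chain.

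First I would invoke Lemma \ref{3}. Since $M$ is finitely generated and the $N_{k}$ are proper graded $R$-submodules of $M$ forming a chain, Lemma \ref{3} guarantees that $N=\displaystyle\bigcup_{k\in\Delta}N_{k}$ is a proper graded $R$-submodule of $M$. (That $N$ is graded can also be seen directly: if $x\in N$, then $x\in N_{j}$ for some $j\in\Delta$, and since $N_{j}$ is graded each homogeneous component $x_{g}$ lies in $N_{j}\subseteq N$; but this is already subsumed in Lemma \ref{3}.) This is the only point at which the finite generation of $M$ is used, and it is exactly what prevents the union of the chain from exhausting $M$.

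Next I would check the absorbing condition. Let $x,y\in h(R)$ be non-units and $m\in h(M)$ with $xym\in N$. The key observation is that $xym$ is a single element, so it must lie in one member of the chain: there is an index $j\in\Delta$ with $xym\in N_{j}$. Since $N_{j}$ is a graded $1$-absorbing prime $R$-submodule and $x,y$ are non-units, either $xy\in(N_{j}:_{R}M)$ or $m\in N_{j}$. In the first case $xyM\subseteq N_{j}\subseteq N$, so $xy\in(N:_{R}M)$; in the second case $m\in N_{j}\subseteq N$. In either case the required alternative holds, whence $N$ is a graded $1$-absorbing prime $R$-submodule of $M$.

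The main (and essentially only) obstacle is securing the properness of $N$, which is precisely the content of Lemma \ref{3} and the reason the finite generation hypothesis is needed; without it a chain of proper graded submodules could have the whole of $M$ as its union. The absorbing condition itself presents no difficulty, because the element $xym$ is captured by a single $N_{j}$, so the $1$-absorbing prime property of that one submodule already yields the conclusion for $N$.
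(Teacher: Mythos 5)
Your proof is correct and follows essentially the same route as the paper: invoke Lemma \ref{3} for properness and gradedness of the union, then locate $xym$ in a single member $N_{j}$ of the chain and apply its $1$-absorbing prime property, noting $(N_{j}:_{R}M)\subseteq\left(\bigcup_{k}N_{k}:_{R}M\right)$ and $N_{j}\subseteq\bigcup_{k}N_{k}$. The only difference is cosmetic: the paper phrases the second step contrapositively by first assuming $m\notin\bigcup_{k}N_{k}$.
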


\begin{proof}By Lemma \ref{3}, $\displaystyle\bigcup_{k\in \Delta}N_{k}$ is a proper graded $R$-submodule of $M$. Let $x, y\in h(R)$ be non-units and $m\in h(M)$ such that $xym\in \displaystyle\bigcup_{k\in \Delta}N_{k}$. Suppose that $m\notin \displaystyle\bigcup_{k\in \Delta}N_{k}$. Then $m\notin N_{k}$ for all $k\in \Delta$. Now, since $xym\in \displaystyle\bigcup_{k\in \Delta}N_{k}$, $xym\in N_{i}$ for some $i\in \Delta$, and since $N_{i}$ is graded $1$-absorbing prime, $xy\in (N_{i}:_{R}M)\subseteq\left(\displaystyle\bigcup_{k\in \Delta}N_{k}:_{R}M\right)$. Hence, $\displaystyle\bigcup_{k\in\Delta}N_{k}$ is a graded $1$-absorbing prime $R$-submodule of $M$.
\end{proof}

Let $M$ and $S$ be two $G$-graded $R$-modules. An $R$-homomorphism $f:M\rightarrow S$ is said to be a graded $R$-homomorphism if $f(M_{g})\subseteq S_{g}$ for all $g\in G$ (\cite{Nastasescue}).

\begin{lem}\label{4}(\cite{Dawwas Bataineh}, Lemma 2.16) Suppose that $f:M\rightarrow S$ is a graded $R$-homomorphism of graded $R$-modules. If $K$ is a graded $R$-submodule of $S$, then $f^{-1}(K)$ is a graded $R$-submodule of $M$.
\end{lem}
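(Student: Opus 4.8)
The plan is to verify the two defining conditions of a graded $R$-submodule: that $f^{-1}(K)$ is an $R$-submodule of $M$, and that it decomposes as the direct sum of its intersections with the components $M_g$. The first condition is routine: the preimage of a submodule under an $R$-linear map is again a submodule, using only that $f$ is $R$-linear and that $K$ is closed under addition and scalar multiplication. So I would dispose of this in a sentence and concentrate on gradedness.

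For gradedness, I would take an arbitrary $x \in f^{-1}(K)$ and write its unique homogeneous decomposition $x = \sum_{g \in G} x_g$ with $x_g \in M_g$. Applying $f$ and using additivity gives $f(x) = \sum_{g \in G} f(x_g)$. The defining property of a graded homomorphism, namely $f(M_g) \subseteq S_g$, forces $f(x_g) \in S_g$ for every $g$, so this expression is precisely the homogeneous decomposition of $f(x)$ in $S$. Now I would invoke that $K$ is a graded $R$-submodule of $S$: since $f(x) \in K$, each homogeneous component of $f(x)$ must lie in $K$. By uniqueness of the homogeneous decomposition, the degree-$g$ component of $f(x)$ is exactly $f(x_g)$, whence $f(x_g) \in K$, i.e. $x_g \in f^{-1}(K)$. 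As this holds for every $g \in G$, each homogeneous component of $x$ lies in $f^{-1}(K)$, which is exactly the requirement $f^{-1}(K) = \bigoplus_{g \in G}\bigl(f^{-1}(K) \cap M_g\bigr)$.

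The only place where the hypotheses genuinely interact is the identification of $f(x_g)$ with the degree-$g$ homogeneous component of $f(x)$; this is where the graded-homomorphism condition and the uniqueness of homogeneous decompositions are both needed, and it is the step I would state most carefully. Beyond that, the argument is entirely formal, so I do not anticipate any real obstacle.
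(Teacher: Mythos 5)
Your argument is correct and is the standard proof of this fact; the paper itself gives no proof, citing it from an external reference, but your route (preimage of a submodule is a submodule, then using $f(M_g)\subseteq S_g$ together with uniqueness of homogeneous decompositions to identify $f(x_g)$ as the degree-$g$ component of $f(x)$ and invoking gradedness of $K$) is exactly the expected one. The key step you flag --- the identification of $f(x_g)$ with the degree-$g$ component of $f(x)$ --- is handled correctly, so there is nothing to object to.
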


\begin{lem}\label{5}(\cite{Atani Saraei}, Lemma 4.8) Suppose that $f:M\rightarrow S$ is a graded $R$-homomorphism of graded $R$-modules. If $L$ is a graded $R$-submodule of $M$, then $f(L)$ is a graded $R$-submodule of $f(M)$.
\end{lem}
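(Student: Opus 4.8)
The plan is to verify the two defining requirements of a graded $R$-submodule of $f(M)$ in turn: first that $f(L)$ is an $R$-submodule of $f(M)$, and then the essential point that it admits the homogeneous decomposition $f(L)=\bigoplus_{g\in G}(f(L)\cap S_{g})$. The gradedness of $f$, encoded in the hypothesis $f(M_{g})\subseteq S_{g}$, is exactly what will connect the grading of $L$ on the source side to a grading of $f(L)$ on the target side.

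First I would record the purely module-theoretic part: $f(L)$ is an $R$-submodule of $S$ contained in $f(M)$, since the image of a submodule under an $R$-homomorphism is again a submodule. This step uses none of the grading. It is also convenient to observe along the way that $f(M)$ is itself graded with components $f(M_{g})$: for $y=f(x)$ with $x=\sum_{g}x_{g}$ we get $y=\sum_{g}f(x_{g})$ and $f(x_{g})\in S_{g}$, so each homogeneous component of $y$ lies in $f(M_{g})$, giving meaning to the phrase ``graded $R$-submodule of $f(M)$''.

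The heart of the argument is the homogeneity of $f(L)$. I would take an arbitrary $y\in f(L)$, write $y=f(\ell)$ with $\ell\in L$, and use that $L$ is graded to decompose $\ell=\sum_{g\in G}\ell_{g}$ with $\ell_{g}\in L\cap M_{g}$. Applying $f$ yields $y=\sum_{g\in G}f(\ell_{g})$, and since $f$ is a graded homomorphism we have $f(\ell_{g})\in f(M_{g})\subseteq S_{g}$. By uniqueness of the homogeneous decomposition in $S$, the degree-$g$ component of $y$ is precisely $y_{g}=f(\ell_{g})$; and because $\ell_{g}\in L$, this gives $y_{g}=f(\ell_{g})\in f(L)$. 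Thus every homogeneous component of an element of $f(L)$ again lies in $f(L)$, which is exactly the condition $f(L)=\bigoplus_{g\in G}(f(L)\cap S_{g})$.

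The one delicate point, and the place where I expect the only real obstacle, is the transition from the raw expression $y=\sum_{g}f(\ell_{g})$ to the assertion that $f(\ell_{g})$ genuinely equals the degree-$g$ component $y_{g}$ of $y$ in $S$. This is precisely where both the gradedness of $f$ (so that $f(\ell_{g})\in S_{g}$) and the uniqueness of graded decompositions are indispensable; dropping the hypothesis $f(M_{g})\subseteq S_{g}$ would cause the argument to collapse. Everything else in the proof is routine verification.
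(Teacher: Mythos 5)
Your argument is correct: the submodule part is routine, and the key step—decomposing $\ell\in L$ into homogeneous pieces $\ell_{g}\in L\cap M_{g}$, pushing forward to get $y=\sum_{g}f(\ell_{g})$ with $f(\ell_{g})\in S_{g}$, and invoking uniqueness of homogeneous decompositions to identify $y_{g}=f(\ell_{g})\in f(L)$—is exactly what is needed. The paper itself gives no proof (it cites the result from the reference \cite{Atani Saraei}), so there is nothing to compare against; your proof is the standard one and fills that gap correctly.
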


\begin{prop}\label{Proposition 4}Suppose that $f:M\rightarrow S$ is a graded $R$-homomorphism of graded $R$-modules.
\begin{enumerate}
\item If $K$ is a graded $1$-absorbing prime $R$-submodule of $S$ and $f^{-1}(K)\neq M$, then $f^{-1}(K)$ is a graded $1$-absorbing prime $R$-submodule of $M$.
\item Assume that $f$ is a graded epimorphism. If $L$ is a graded $1$-absorbing prime $R$-submodule of $M$ containing $Ker(f)$, then $f(L)$ is a graded $1$-absorbing prime $R$-submodule of $S$.
\end{enumerate}
\end{prop}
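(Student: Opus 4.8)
The plan is to verify the defining property of a graded $1$-absorbing prime submodule directly in each case, using Lemmas \ref{4} and \ref{5} for the underlying graded-submodule structure and the basic fact that a graded $R$-homomorphism carries homogeneous elements to homogeneous elements (if $m\in M_{g}$ then $f(m)\in S_{g}\subseteq h(S)$).

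For part (1), Lemma \ref{4} already gives that $f^{-1}(K)$ is a graded $R$-submodule of $M$, and it is proper by the hypothesis $f^{-1}(K)\neq M$. I would take non-unit $x,y\in h(R)$ and $m\in h(M)$ with $xym\in f^{-1}(K)$. Applying $f$ yields $xyf(m)\in K$, and since $f(m)\in h(S)$, the graded $1$-absorbing prime property of $K$ gives $xy\in(K:_{R}S)$ or $f(m)\in K$. The second alternative immediately gives $m\in f^{-1}(K)$; for the first, I would check $xy\in(f^{-1}(K):_{R}M)$ by observing that for arbitrary $m'\in M$ we have $f(xym')=xyf(m')\in xyS\subseteq K$, so $xym'\in f^{-1}(K)$. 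This part is entirely routine.

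For part (2), the first task is propriety of $f(L)$. Lemma \ref{5} shows $f(L)$ is a graded $R$-submodule of $f(M)=S$ (using surjectivity), and here the hypothesis $Ker(f)\subseteq L$ is essential: if $f(L)=S=f(M)$, then for any $m\in M$ there is $l\in L$ with $f(m)=f(l)$, whence $m-l\in Ker(f)\subseteq L$ and $m\in L$, forcing $L=M$ and contradicting properness of $L$. The second task is the verification itself. I would take non-unit $x,y\in h(R)$ and $s\in h(S)$ with $xys\in f(L)$. The delicate preliminary step is to extract a \emph{homogeneous} preimage of $s$: since $s\in S_{g}$ for some $g$ and $f$ is a graded epimorphism, decomposing an arbitrary preimage into homogeneous components and invoking uniqueness of the graded decomposition shows $f(M_{g})=S_{g}$, so there is $m\in M_{g}\subseteq h(M)$ with $f(m)=s$. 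Then $f(xym)=xys\in f(L)$, and using $Ker(f)\subseteq L$ once more gives $xym\in L$; note $xym\in h(M)$ as $x,y\in h(R)$ and $m\in h(M)$. The graded $1$-absorbing prime property of $L$ then yields $xy\in(L:_{R}M)$ or $m\in L$: in the latter case $s=f(m)\in f(L)$, and in the former I would deduce $xy\in(f(L):_{R}S)$ from surjectivity, since every $s'\in S$ equals $f(m')$ for some $m'\in M$, giving $xys'=f(xym')\in f(xyM)\subseteq f(L)$.

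The main obstacle is the repeated, essential use of $Ker(f)\subseteq L$ together with the constraint of working only with homogeneous elements: one must produce a homogeneous preimage of $s$ (not merely an arbitrary preimage) before applying the definition, and one must argue propriety of $f(L)$ rather than taking it for granted. Both points rest on the kernel-containment hypothesis, which is precisely what makes $f(L)$ well behaved; once these are handled, the remaining manipulations are straightforward.
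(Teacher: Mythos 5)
Your proposal is correct and follows essentially the same route as the paper's proof: pull back (resp.\ push forward) via $f$, use Lemmas \ref{4} and \ref{5} for the graded-submodule structure, and use $Ker(f)\subseteq L$ to lift $xys\in f(L)$ to $xym\in L$. You additionally spell out a few points the paper leaves implicit --- the properness of $f(L)$, the existence of a homogeneous preimage of $s$ via $f(M_{g})=S_{g}$, and the verification of the colon-ideal memberships --- which are worthwhile details but do not change the argument.
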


\begin{proof}
\begin{enumerate}
\item By Lemma \ref{4}, $f^{-1}(K)$ is a graded $R$-submodule of $M$. Let $x, y\in h(R)$ be non-units and $m\in h(M)$ such that $xym\in f^{-1}(K)$. Then $f(m)\in h(S)$ such that $xyf(m)=f(xym)\in K$. Since $K$ is graded $1$-absorbing prime, either $xy\in (K:_{R}S)$ or $f(m)\in K$, and then either $xy\in (f^{-1}(K):_{R}M)$ or $m\in f^{-1}(K)$. Hence, $f^{-1}(K)$ is a graded $1$-absorbing prime $R$-submodule of $M$.
\item By Lemma \ref{5}, $f(L)$ is a graded $R$-submodule of $S$. Let $x, y\in h(R)$ be non-units and $s\in h(S)$ such that $xys\in f(L)$. Since $f$ is graded epimorphism, there exists $m\in h(M)$ such that $s=f(m)$, and then $f(xym)=xyf(m)=xys\in f(L)$, which implies that $f(xym)=f(t)$ for some $t\in L$, and then $xym-t\in Ker(f)\subseteq L$, which gives that $xym\in L$. Since $L$ is graded $1$-absorbing prime, either $xy\in (L:_{R}M)$ or $m\in L$, and then either $xy\in (f(L):_{R}S)$ or $s=f(m)\in f(L)$. Hence, $f(L)$ is a graded $1$-absorbing prime $R$-submodule of $S$.
\end{enumerate}
\end{proof}

If $M$ is a $G$-graded $R$-module and $L$ is a graded $R$-submodule of $M$, then $M/L$ is a $G$-graded $R$-module by $(M/L)_{g}=(M_{g}+L)/L$ for all $g\in G$ (\cite{Nastasescue}).

\begin{cor}\label{Corollary 2}Let $M$ be a graded $R$-module and $L\subseteq N$ be graded $R$-submodules of $M$. If $N$ is a graded $1$-absorbing prime $R$-submodule of $M$, then $N/L$ is a graded $1$-absorbing prime $R$-submodule of $M/L$.
\end{cor}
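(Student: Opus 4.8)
The plan is to obtain $N/L$ as the image of $N$ under the canonical quotient map and then apply Proposition \ref{Proposition 4}(2). First I would introduce the canonical projection $\pi : M \to M/L$ defined by $\pi(m) = m + L$. This is clearly an $R$-homomorphism, and it respects the grading: for each $g \in G$ we have $\pi(M_{g}) = (M_{g} + L)/L = (M/L)_{g}$, so $\pi$ is a graded $R$-homomorphism in the sense recalled just before Lemma \ref{4}. Moreover $\pi$ is surjective, hence a graded epimorphism with $\pi(M) = M/L$, and $Ker(\pi) = L$.

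Next I would verify that the hypotheses of Proposition \ref{Proposition 4}(2) hold, taking $\pi$ for the map $f$ and $N$ for the graded $1$-absorbing prime submodule (the submodule called $L$ in that proposition). Indeed $\pi$ is a graded epimorphism, $N$ is a graded $1$-absorbing prime $R$-submodule of $M$ by assumption, and $N$ contains $Ker(\pi) = L$ exactly because $L \subseteq N$. Proposition \ref{Proposition 4}(2) then yields that $\pi(N)$ is a graded $1$-absorbing prime $R$-submodule of $\pi(M) = M/L$.

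Finally I would identify the image: since $L \subseteq N$, one has $\pi(N) = \{\, n + L : n \in N \,\} = N/L$. Substituting this into the conclusion of the previous step shows that $N/L$ is a graded $1$-absorbing prime $R$-submodule of $M/L$, which is exactly the claim.

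I do not anticipate a genuine obstacle, since the corollary is essentially the specialization of Proposition \ref{Proposition 4}(2) to the canonical projection $\pi$. The only points needing attention are confirming that $\pi$ is compatible with the grading---immediate from the description $(M/L)_{g} = (M_{g} + L)/L$ recalled just before this corollary---and avoiding the notational clash whereby the symbol $L$ denotes the kernel here but the ambient $1$-absorbing prime submodule in the statement of Proposition \ref{Proposition 4}.
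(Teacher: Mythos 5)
Your proposal is correct and follows exactly the paper's own argument: define the canonical projection $\pi:M\rightarrow M/L$, note that it is a graded epimorphism with $Ker(\pi)=L\subseteq N$, and apply Proposition \ref{Proposition 4}(2) to conclude that $\pi(N)=N/L$ is graded $1$-absorbing prime in $M/L$. The extra care you take in checking the grading compatibility and the notational clash is sound but adds nothing beyond the paper's proof.
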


\begin{proof}Define $f:M\rightarrow M/L$ by $f(x)=x+L$. Then $f$ is a graded epimorphism with $Ker(f)=L\subseteq N$. So, by Proposition \ref{Proposition 4}, $f(N)=N/L$ is a graded $1$-absorbing prime $R$-submodule of $M/L$.
\end{proof}

\begin{defn}Let $M$ be a graded $R$-module, $L$ be a proper graded $R$-submodule of $M$ and $N$ be a graded $1$-absorbing prime $R$-submodule of $M$ with $L\subseteq N$. Then $N$ is said to a minimal graded $1$-absorbing prime $R$-submodule with respect to $L$ if there is no a graded $1$-absorbing prime $R$-submodule $P$ of $M$ such that $L\subseteq P\subset N$.
\end{defn}

\begin{thm}\label{Theorem 3}Let $M$ be a graded $R$-module and $L$ be a proper graded $R$-submodule of $M$. If $N$ is a graded $1$-absorbing prime $R$-submodule of $M$ such that $L\subseteq N$, then there exists a minimal graded $1$-absorbing prime $R$-submodule with respect to $L$ that it is contained in $N$.
\end{thm}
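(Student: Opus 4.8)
The plan is to obtain the desired minimal submodule as a maximal element of an auxiliary poset, via Zorn's Lemma. First I would introduce the collection
\[
\Sigma=\{P:P\text{ is a graded }1\text{-absorbing prime }R\text{-submodule of }M\text{ with }L\subseteq P\subseteq N\},
\]
partially ordered by \emph{reverse} inclusion, so that $P_{1}\leq P_{2}$ means $P_{2}\subseteq P_{1}$. With this convention, a minimal graded $1$-absorbing prime $R$-submodule with respect to $L$ that is contained in $N$ is precisely a maximal element of $(\Sigma,\leq)$, namely a member of $\Sigma$ containing no strictly smaller member of $\Sigma$. The collection $\Sigma$ is nonempty because $N\in\Sigma$ by hypothesis.

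Next I would verify the chain hypothesis of Zorn's Lemma. Let $\{P_{k}\}_{k\in\Delta}$ be a chain in $(\Sigma,\leq)$; equivalently, it is a chain of graded $1$-absorbing prime $R$-submodules of $M$ under ordinary inclusion, each satisfying $L\subseteq P_{k}\subseteq N$. By Proposition \ref{Proposition 3 (1)}, the intersection $P=\bigcap_{k\in\Delta}P_{k}$ is again a graded $1$-absorbing prime $R$-submodule of $M$, and in particular it is proper. Since $L\subseteq P_{k}$ for every $k$, we obtain $L\subseteq P$, and since $P\subseteq P_{k}\subseteq N$, we obtain $P\subseteq N$; hence $P\in\Sigma$. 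As $P\subseteq P_{k}$ for all $k$, the submodule $P$ is an upper bound for the chain in the reverse-inclusion order. Thus every chain in $\Sigma$ has an upper bound in $\Sigma$.

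By Zorn's Lemma, $(\Sigma,\leq)$ possesses a maximal element $P_{0}$. Unwinding the definition of $\leq$, this says that $P_{0}$ is a graded $1$-absorbing prime $R$-submodule of $M$ with $L\subseteq P_{0}\subseteq N$ for which there is no graded $1$-absorbing prime $R$-submodule $P$ of $M$ with $L\subseteq P\subset P_{0}$. This is exactly the statement that $P_{0}$ is a minimal graded $1$-absorbing prime $R$-submodule with respect to $L$, and it is contained in $N$, as required.

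The step carrying the real content is the closure of $\Sigma$ under intersections of chains, but here this is handed to us directly by Proposition \ref{Proposition 3 (1)}, so the only remaining verifications are the trivial inclusions $L\subseteq P$ and $P\subseteq N$ and the preservation of properness. Consequently I expect no genuine obstacle beyond the bookkeeping point of orienting the poset by reverse inclusion, so that ``minimal with respect to $L$'' correctly corresponds to ``maximal in $\Sigma$.''
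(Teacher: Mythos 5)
Your proposal is correct and follows essentially the same route as the paper: both form the family of graded $1$-absorbing prime $R$-submodules between $L$ and $N$, order it by reverse inclusion, use Proposition \ref{Proposition 3 (1)} to show chains have intersections in the family, and apply Zorn's Lemma to extract a maximal element, which is the desired minimal submodule. Your write-up is if anything slightly more careful than the paper's in making the reverse-inclusion bookkeeping explicit.
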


\begin{proof}Let $X$ be the set of all graded $1$-absorbing prime $R$-submodules $N_{i}$ of $M$ such that $L\subseteq N_{i}\subseteq N$. Then $X$ is non-empty as it contains $N$. Consider $(X, \supseteq)$. Let $\left\{N_{k}\right\}_{k\in \Delta}$ be a chain in $X$. Then by Proposition \ref{Proposition 3 (1)}, $\displaystyle\bigcap_{k\in \Delta}N_{k}$ is a graded $1$-absorbing prime $R$-submodule of $M$, and then by Zorn's Lemma, $X$ has a maximal element $K$. So, $K$ is a graded $1$-absorbing prime $R$-submodule of $M$ such that $L\subseteq K\subseteq N$. If $K$ is not minimal graded $1$-absorbing prime $R$-submodule with respect to $L$, then there exists a graded $1$-absorbing prime $R$-submodule $P$ of $M$ such that $L\subseteq P\subseteq K$, and then $P\in X$, which implies that $K\subseteq P$, and hence $P=K$. Thus, $K$ is a minimal graded $1$-absorbing prime $R$-submodule with respect to $L$.
\end{proof}

Let $I$ be a graded ideal of $R$. Then $Grad(I)$ is the intersection of all graded prime ideals of $R$ containing $I$ (\cite{Zoubi Qarqaz}). Similarly, if $N$ is a graded $R$-submodule of $M$, then the graded radical of $N$ is $Grad(N)$ which is the intersection of all graded prime $R$-submodules of $M$ containing $N$. Motivated by this, we have the following definition.

\begin{defn}
\begin{enumerate}
\item Let $I$ be a graded ideal of $R$. Then the $1$-graded radical of $I$ is the intersection of all graded $1$-absorbing prime ideals of $R$ containing $I$, and is denoted by $Grad_{1}(I)$. If $I=R$ or $R$ has no graded $1$-absorbing prime ideals, we define $Grad_{1}(I)=R$.
\item Let $N$ be a graded $R$-submodule of $M$. Then the $1$-graded radical of $N$ is the intersection of all graded $1$-absorbing prime $R$-submodules of $M$ containing $N$, and is denoted by $Grad_{1}(N)$. If $N=M$ or $M$ has no graded $1$-absorbing prime $R$-submodules, we define $Grad_{1}(N)=M$.
\end{enumerate}
\end{defn}

\begin{rem}Since every graded prime ideal is graded $1$-absorbing prime, then $Grad_{1}(I)\subseteq Grad(I)$ for all graded ideal $I$ of $R$. Similarly, $Grad_{1}(N)\subseteq Grad(N)$ for all graded $R$-submodule $N$ of $M$.
\end{rem}

Motivated by (\cite{Refai Zoubi}, Proposition 1.2), we state the following proposition, and the proof is elementary.

\begin{prop}\label{Proposition 5}Let $M$ be a graded $R$-module and $N, K$ be two graded $R$-submodules of $M$. Then the following statements hold:
\begin{enumerate}
\item $N\subseteq Grad_{1}(N)$.
\item $Grad_{1}(Grad_{1}(N))\subseteq Grad_{1}(N)$.
\item $Grad_{1}\left(N\bigcap K\right)\subseteq Grad_{1}(N)\bigcap Grad_{1}(K)$.
\end{enumerate}
\end{prop}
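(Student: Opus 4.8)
The plan is to derive all three statements from a single monotonicity principle for $Grad_{1}$, so I would first isolate that principle. For graded $R$-submodules $A\subseteq B$ of $M$, I claim that $Grad_{1}(A)\subseteq Grad_{1}(B)$. The reason is purely order-theoretic: every graded $1$-absorbing prime $R$-submodule $P$ of $M$ with $B\subseteq P$ automatically satisfies $A\subseteq P$, so the family of graded $1$-absorbing prime submodules containing $B$ is a subfamily of those containing $A$; intersecting over a smaller family yields a larger submodule, which is exactly $Grad_{1}(A)\subseteq Grad_{1}(B)$. I would dispose of the degenerate cases already here: if no graded $1$-absorbing prime submodule contains $B$ (in particular if $B=M$, since such submodules are proper), then $Grad_{1}(B)=M$ by the defining convention and the inclusion is automatic, and the case where $M$ has no graded $1$-absorbing prime submodules at all collapses everything to $M$ and is likewise immediate.

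With monotonicity in hand, statement (1) is read directly off the definition: when the defining family is nonempty, every member contains $N$, hence so does their intersection $Grad_{1}(N)$, while when the family is empty we have $Grad_{1}(N)=M\supseteq N$. For statement (3) I would simply apply monotonicity to the trivial inclusions $N\cap K\subseteq N$ and $N\cap K\subseteq K$ (noting that $N\cap K$ is again a graded $R$-submodule of $M$ by Lemma~\ref{1}), obtaining $Grad_{1}(N\cap K)\subseteq Grad_{1}(N)$ and $Grad_{1}(N\cap K)\subseteq Grad_{1}(K)$, and then intersect the two right-hand sides.

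Statement (2) requires a touch more care, since monotonicity together with (1) only delivers the reverse containment $Grad_{1}(N)\subseteq Grad_{1}(Grad_{1}(N))$. To get the asserted direction I would show that the two defining families actually coincide. Let $P$ be any graded $1$-absorbing prime submodule with $N\subseteq P$; since $Grad_{1}(N)$ is the intersection of all such $P$, we have $Grad_{1}(N)\subseteq P$, so $P$ is also a graded $1$-absorbing prime submodule containing $Grad_{1}(N)$. Conversely, by (1) any graded $1$-absorbing prime submodule containing $Grad_{1}(N)$ contains $N$. Hence the family of graded $1$-absorbing prime submodules containing $Grad_{1}(N)$ is identical to the one containing $N$, and taking intersections gives $Grad_{1}(Grad_{1}(N))=Grad_{1}(N)$, which in particular yields the stated inclusion.

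I do not anticipate a genuine obstacle here; the entire content is the bookkeeping of intersections of families of submodules, and the only point demanding explicit attention is the convention $Grad_{1}(\,\cdot\,)=M$ attached to empty families and to the case $N=M$, which must be invoked by name so that each inclusion remains valid in the degenerate situations rather than silently presuming the relevant families are nonempty.
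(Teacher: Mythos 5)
Your proof is correct. The paper offers no proof of this proposition at all --- it is stated with the remark that ``the proof is elementary'' --- and your order-theoretic argument (monotonicity of $Grad_{1}$ from reverse inclusion of the defining families, plus the observation for part (2) that the family of graded $1$-absorbing prime submodules containing $Grad_{1}(N)$ coincides with the family containing $N$) is exactly the standard argument the authors are alluding to, with the empty-family and $N=M$ conventions handled properly.
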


\begin{prop}\label{Proposition 5 (5)}Let $M$ be a graded $R$-module and $N$ be a graded $R$-submodule of $M$. Then $Grad_{1}((N:_{R}M))\subseteq(Grad_{1}(N):_{R}M)$.
\end{prop}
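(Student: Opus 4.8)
The plan is to unwind both sides of the asserted inclusion into intersections of colon ideals and then feed each contributing submodule through Proposition \ref{Proposition 2 (1)}. First I would record an elementary identity: for any family $\{P_{i}\}$ of graded $R$-submodules of $M$, the colon operation with $M$ commutes with intersection, i.e. $\left(\displaystyle\bigcap_{i}P_{i}:_{R}M\right)=\displaystyle\bigcap_{i}(P_{i}:_{R}M)$, because $rM\subseteq\bigcap_{i}P_{i}$ holds if and only if $rM\subseteq P_{i}$ for every $i$. Applying this to the family of all graded $1$-absorbing prime $R$-submodules $P$ of $M$ with $N\subseteq P$ gives $(Grad_{1}(N):_{R}M)=\displaystyle\bigcap_{N\subseteq P}(P:_{R}M)$, which rewrites the right-hand side of the claim as an intersection of colon ideals.

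The core step is then as follows. Fix any graded $1$-absorbing prime $R$-submodule $P$ of $M$ with $N\subseteq P$. By Proposition \ref{Proposition 2 (1)}, $(P:_{R}M)$ is a graded $1$-absorbing prime ideal of $R$, and from $N\subseteq P$ we immediately get $(N:_{R}M)\subseteq(P:_{R}M)$. Hence $(P:_{R}M)$ is itself one of the graded $1$-absorbing prime ideals of $R$ containing $(N:_{R}M)$, so it appears in the intersection defining $Grad_{1}((N:_{R}M))$; consequently $Grad_{1}((N:_{R}M))\subseteq(P:_{R}M)$. Since this holds for every such $P$, intersecting over all of them and invoking the identity from the first step yields $Grad_{1}((N:_{R}M))\subseteq\displaystyle\bigcap_{N\subseteq P}(P:_{R}M)=(Grad_{1}(N):_{R}M)$, which is exactly the desired inclusion.

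Finally I would dispose of the degenerate cases built into the definition of $Grad_{1}$: if $N=M$, or if $M$ has no graded $1$-absorbing prime $R$-submodule containing $N$, then $Grad_{1}(N)=M$ and the right-hand side equals $(M:_{R}M)=R$, so the inclusion is trivial; the case $(N:_{R}M)=R$ is handled the same way on the ideal side. I do not anticipate a real obstacle in this argument. The only point that genuinely needs care is the bookkeeping between the two distinct $Grad_{1}$ operators—one acting on ideals of $R$ and one on submodules of $M$—and verifying that Proposition \ref{Proposition 2 (1)} converts each submodule occurring in the intersection on the right into an admissible ideal occurring in the intersection on the left.
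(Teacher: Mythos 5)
Your proposal is correct and takes essentially the same route as the paper: both arguments hinge on Proposition \ref{Proposition 2 (1)} to turn each graded $1$-absorbing prime submodule $L\supseteq N$ into a graded $1$-absorbing prime ideal $(L:_{R}M)\supseteq(N:_{R}M)$, conclude $Grad_{1}((N:_{R}M))\subseteq(L:_{R}M)$, and then intersect over all such $L$. The only cosmetic difference is that the paper passes through $Grad_{1}((N:_{R}M))M\subseteq L$ for each $L$, whereas you intersect the colon ideals directly via the identity $\left(\bigcap_{i}P_{i}:_{R}M\right)=\bigcap_{i}(P_{i}:_{R}M)$.
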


\begin{proof}If $Grad_{1}(N)=M$, then it is done. Suppose that $Grad_{1}(N)\neq M$. Then there exists a graded $1$-absorbing prime $R$-submodule $L$ of $M$ such that $N\subseteq L$, and then by Proposition \ref{Proposition 2 (1)}, $(L:_{R}M)$ is a graded $1$-absorbing prime ideal of $R$ such that $(N:_{R}M)\subseteq(L:_{R}M)$, which implies that $Grad_{1}((N:_{R}M))\subseteq (L:_{R}M)$, and hence $Grad_{1}((N:_{R}M))M\subseteq L$. In fact, in similar way, $Grad_{1}((N:_{R}M))M\subseteq L_{i}$ for every graded $1$-absorbing prime $R$-submodule $L_{i}$ of $M$ such that $N\subseteq L_{i}$. So, $Grad_{1}((N:_{R}M))M\subseteq Grad_{1}(N)$, which means that $Grad_{1}((N:_{R}M))\subseteq(Grad_{1}(N):_{R}M)$.
\end{proof}

\begin{prop}\label{6}Let $R$ be a graded ring and $I$ be a graded ideal of $R$. If $I$ is a graded $1$-absorbing prime ideal of $R$, then $I_{e}$ is a $1$-absorbing prime ideal of $R_{e}$.
\end{prop}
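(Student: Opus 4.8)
The plan is to verify directly that $I_e = I \cap R_e$ satisfies the defining condition of a (non-graded) $1$-absorbing prime ideal of $R_e$. First I would note that $I_e = I \cap R_e$ is an ideal of $R_e$ since $I$ is graded, and that it is proper: as $I$ is a proper graded ideal we have $1 \notin I$, and since $1 \in R_e$ this gives $1 \notin I \cap R_e = I_e$, so $I_e \neq R_e$.

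The key preliminary point, and the one that will need care, concerns units. To apply the graded $1$-absorbing prime hypothesis on $I$ I need the elements fed into it to be non-units \emph{in $R$}, whereas the definition of a $1$-absorbing prime ideal of $R_e$ refers to non-units \emph{in $R_e$}. So I would first establish that an element $x \in R_e$ is a unit in $R_e$ if and only if it is a unit in $R$. The nontrivial direction is: if $x \in R_e$ is a unit in $R$, say $xy = 1$ with $y = \sum_{g \in G} y_g$, then comparing homogeneous components of $xy = 1$ (using $x y_g \in R_g$ for each $g$) forces $x y_e = 1$ and $x y_g = 0$ for $g \neq e$; hence $y_e \in R_e$ is an inverse of $x$ inside $R_e$. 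Consequently every non-unit of $R_e$ is a non-unit of $R$.

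With this observation in hand, the verification is routine. I would take non-unit elements $x, y, z \in R_e$ with $xyz \in I_e$; by the observation these are non-units in $R$, and being of degree $e$ they lie in $h(R)$, while $xyz \in I_e \subseteq I$. Since $I$ is graded $1$-absorbing prime, either $xy \in I$ or $z \in I$. In the first case $xy \in R_e$ (a product of degree-$e$ elements), so $xy \in I \cap R_e = I_e$; in the second case $z \in R_e$ already, so $z \in I_e$. Either way the defining condition for $I_e$ holds, so $I_e$ is a $1$-absorbing prime ideal of $R_e$.

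The only genuine obstacle is the unit compatibility between $R_e$ and $R$; once that is settled, the remainder is a direct unwinding of the definitions together with the fact that $R_e$ is closed under multiplication.
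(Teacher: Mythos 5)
Your proof is correct and follows essentially the same route as the paper's: feed the non-unit elements of $R_e$, which are homogeneous, into the graded $1$-absorbing prime hypothesis on $I$, then intersect back with $R_e$ using $R_eR_e\subseteq R_e$. The one place you go beyond the paper is your explicit check that a non-unit of $R_e$ is a non-unit of $R$ (by comparing homogeneous components of $xy=1$); the paper's proof silently assumes this compatibility of units, so your added care closes a small gap rather than being redundant.
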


\begin{proof}Let $x,y, z\in R_{e}$ be non-units such that $xyz\in I_{e}$. Then $x, y, z\in h(R)$ such that $xyz\in I$. Since $I$ is graded $1$-absorbing prime, either $xy\in I$ or $z\in I$. If $xy\in I$, then $xy\in R_{e}R_{e}\bigcap I\subseteq R_{e}\bigcap I=I_{e}$. If $z\in I$, then $z\in R_{e}\bigcap I=I_{e}$. Hence, $I_{e}$ is a $1$-absorbing prime ideal of $R_{e}$.
\end{proof}

\begin{prop}\label{7}Let $M$ be a $G$-graded $R$-module and $N$ be a graded $R$-submodule of $M$. If $N$ is a graded $1$-absorbing prime $R$-submodule of $M$, then $N_{g}$ is a $1$-absorbing prime $R_{e}$-submodule of $M_{g}$ for al $g\in G$.
\end{prop}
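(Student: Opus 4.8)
The plan is to mirror the argument of Proposition \ref{6}, transporting the graded $1$-absorbing prime condition on $N$ down to its degree-$g$ component. First I would record the structural facts: since $N$ is graded, $N_{g}=N\cap M_{g}$, and because $M_{g}$ is an $R_{e}$-module with $R_{e}M_{g}\subseteq M_{eg}=M_{g}$, the set $N_{g}=N\cap M_{g}$ is an $R_{e}$-submodule of $M_{g}$. Thus $N_{g}$ is at least a candidate, and the task reduces to checking the $1$-absorbing prime condition for inputs drawn from $R_{e}$ and $M_{g}$.

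A point that needs care is the translation of units. The definition of graded $1$-absorbing prime submodule demands that $x,y$ be non-units of $R$, whereas here $x,y$ are non-units of $R_{e}$. These two notions coincide on $R_{e}$: if $u\in R_{e}$ satisfies $uv=1$ for some $v=\sum_{h}v_{h}\in R$, then comparing the degree-$g$ components of $uv$ (which lie in $R_{g}$) with $1\in R_{e}$ forces $uv_{e}=1$, so the inverse is already homogeneous of degree $e$ and $u$ is a unit of $R_{e}$. Hence a non-unit of $R_{e}$ is a non-unit of $R$, and such elements are legitimate inputs to the hypothesis on $N$.

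With this in hand the core verification is short. Take non-units $x,y\in R_{e}$ and $m\in M_{g}$ with $xym\in N_{g}$. Since $N_{g}\subseteq N$, $x,y\in h(R)$ are non-units, and $m\in h(M)$, the graded $1$-absorbing prime property of $N$ gives $xy\in(N:_{R}M)$ or $m\in N$. In the first case $xy\in R_{e}$ and $xyM_{g}\subseteq N$; as $xyM_{g}\subseteq M_{g}$ this yields $xyM_{g}\subseteq N\cap M_{g}=N_{g}$, i.e. $xy\in(N_{g}:_{R_{e}}M_{g})$. In the second case $m\in N\cap M_{g}=N_{g}$. Either way the $1$-absorbing prime condition holds for $N_{g}$.

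The step I expect to be the real obstacle is properness. The ungraded notion of a $1$-absorbing prime $R_{e}$-submodule requires $N_{g}\subsetneq M_{g}$, yet a proper graded submodule $N$ may perfectly well satisfy $N_{g}=M_{g}$ for some $g$ (for instance when $N$ equals the degree-$g$ component of $M$ in a suitable grading). Consequently, the statement should be read for those $g$ with $N_{g}\neq M_{g}$, in line with the paper's own conventions for $g$-prime and $g$-$1$-absorbing prime submodules. This is the one place where the naive mirror of Proposition \ref{6}, in which $1\in R_{e}$ automatically forces $I_{e}\neq R_{e}$, does not transfer to the module setting; so I would either restrict to such $g$ or assume $N_{g}\neq M_{g}$ explicitly before running the verification above.
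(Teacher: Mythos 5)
Your argument is essentially identical to the paper's: pass from non-units of $R_{e}$ to non-units of $R$, apply the graded $1$-absorbing prime property of $N$ to $xym\in N$, and intersect with $M_{g}$ in each of the two resulting cases to land in $(N_{g}:_{R_{e}}M_{g})$ or $N_{g}$. The two refinements you add --- verifying that a non-unit of $R_{e}$ stays a non-unit of $R$ via homogeneity of inverses, and flagging that $N_{g}=M_{g}$ can occur so the conclusion should be restricted to those $g$ with $N_{g}\neq M_{g}$ --- are both correct and are silently glossed over in the paper's proof.
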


\begin{proof}Let $g\in G$. Suppose that $x, y\in R_{e}$ be non-units and $m\in M_{g}$ such that $xym\in N_{g}$. Then $x, y\in h(R)$ and $m\in h(M)$ such that $xym\in N$. Since $N$ is graded $1$-absorbing prime, either $xy\in(N:_{R}M)$ or $m\in N$. If $xy\in(N:_{R}M)$, then $xy\in R_{e}$ such that $xyM_{g}\subseteq xyM\subseteq N$, also, $xyM_{g}\subseteq R_{e}M_{g}\subseteq M_{g}$, and hence $xyM_{g}\subseteq N\bigcap M_{g}=N_{g}$, which implies that $xy\in (N_{g}:_{R_{e}}M_{g})$. If $m\in N$, then $m\in N\bigcap M_{g}=N_{g}$. Hence, $N_{g}$ is a $1$-absorbing prime $R_{e}$-submodule of $M_{g}$.
\end{proof}

Let $M$ be a $G$-graded $R$-module and $g\in G$. In the sense of \cite{El-Baset}, $M_{g}$ is said to be a multiplication $R_{e}$-module if whenever $N$ is an $R_{e}$-submodule of $M_{g}$, then $N=IM_{g}$ for some ideal $I$ of $R_{e}$. Note that, since $I\subseteq (N :_{R_{e}} M_{g})$, $N = IM_{g}\subseteq (N :_{R_{e}} M_{g})M_{g}\subseteq N$. So, if $M_{g}$ is a multiplication $R_{e}$-module, then $N = (N :_{R_{e}} M_{g})M_{g}$ for every $R_{e}$-submodule $N$ of $M_{g}$.

\begin{prop}\label{Theorem 5}Let $M$ be a $G$-graded $R$-module and $g\in G$ such that $M_{g}$ is a faithful multiplication $R_{e}$-module. Suppose that $I$ is a $1$-absorbing prime ideal of $R_{e}$. Then whenever $x, y\in R_{e}$ are non-units and $m\in M_{g}$ such $xym\in IM_{g}$, then either $xy\in I$ or $m\in IM_{g}$.
\end{prop}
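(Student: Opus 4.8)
The plan is to prove the contrapositive in the module variable: assuming $m\notin IM_g$, I will show $xy\in I$. The first move is to trade the single element $m$ for an ideal by exploiting that $M_g$ is a multiplication $R_e$-module. Since $R_em$ is an $R_e$-submodule of $M_g$, the displayed multiplication identity gives $R_em=JM_g$, where $J:=(R_em:_{R_e}M_g)$.

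Next I would produce a scalar $c$ on which to run the $1$-absorbing prime hypothesis. If $J\subseteq I$, then $m\in R_em=JM_g\subseteq IM_g$, contradicting $m\notin IM_g$; hence $J\not\subseteq I$ and I may fix $c\in J\setminus I$. Because $IM_g$ is an $R_e$-submodule and $xym\in IM_g$, we get $xy\,R_em=R_e(xym)\subseteq IM_g$, that is, $xyJM_g\subseteq IM_g$. Invoking the cancellation property of the \emph{faithful} multiplication module $M_g$ (so that $AM_g\subseteq BM_g$ implies $A\subseteq B$; see \cite{El-Baset}) with $A=xyJ$ and $B=I$ yields $xyJ\subseteq I$, and since $c\in J$ this gives $xyc\in I$.

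Finally I would feed $xyc\in I$ into the $1$-absorbing primeness of $I$ in $R_e$. If $c$ happens to be a unit, then $xy=(xyc)c^{-1}\in I$ at once. If $c$ is a non-unit, then $x,y,c$ are three non-units of $R_e$ with $xyc\in I$, so either $xy\in I$ or $c\in I$; the choice $c\notin I$ forces $xy\in I$. In either case $xy\in I$, as required. The step I expect to be the real obstacle is the cancellation $xyJM_g\subseteq IM_g\Rightarrow xyJ\subseteq I$: this is precisely where faithfulness is used, since a bare multiplication module only gives weak cancellation, namely $AM_g=BM_g\Rightarrow A+\mathrm{Ann}_{R_e}(M_g)=B+\mathrm{Ann}_{R_e}(M_g)$, which combines with $\mathrm{Ann}_{R_e}(M_g)=0$ to yield the genuine cancellation invoked above. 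The remaining case split on whether $c$ is a unit is routine.
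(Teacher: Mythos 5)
Your overall strategy is the right one, and in fact it is essentially the argument that the paper never writes down: the paper's entire proof of this proposition is the one-line citation ``apply (\cite{Ugurlu}, Theorem 5) to the $R_{e}$-module $M_{g}$,'' and what you have produced is the standard proof of that cited ungraded theorem --- replace $m$ by the ideal $J=(R_{e}m:_{R_{e}}M_{g})$ using $R_{e}m=JM_{g}$, pass from $xyJM_{g}\subseteq IM_{g}$ to $xyJ\subseteq I$, pick $c\in J\setminus I$, and finish by applying $1$-absorbing primeness to the three non-units $x,y,c$ (with the separate, easy case where $c$ is a unit). So you are not taking a different route; you are opening the black box the paper defers to.

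The genuine weak point is exactly the step you flag, and the justification you offer for it does not hold at the stated level of generality. The cancellation law $AM_{g}\subseteq BM_{g}\Rightarrow A\subseteq B$ (equivalently, $(BM_{g}:_{R_{e}}M_{g})=B$ for every ideal $B$), and likewise the ``weak cancellation'' $AM_{g}=BM_{g}\Rightarrow A+Ann_{R_{e}}(M_{g})=B+Ann_{R_{e}}(M_{g})$ that you attribute to bare multiplication modules, are theorems for \emph{finitely generated} (faithful) multiplication modules; neither survives the removal of finite generation. For example, $M=\bigoplus_{i\in\mathbb{N}}\mathbb{Z}_{2}$ is a faithful multiplication module over $R=\prod_{i\in\mathbb{N}}\mathbb{Z}_{2}$ (every submodule of $M$ is of the form $I_{S}M$ for the ideal $I_{S}$ of elements supported on a subset $S$), yet $RM=BM=M$ for the proper ideal $B=\bigoplus_{i\in\mathbb{N}}\mathbb{Z}_{2}$, even though $Ann_{R}(M)=0$ and $R\neq B$; so both cancellation laws fail. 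Consequently the implication $xyJM_{g}\subseteq IM_{g}\Rightarrow xyJ\subseteq I$ needs either a finite-generation hypothesis on $M_{g}$ or an argument tailored to the particular ideals involved (e.g.\ exploiting that $I$ is $1$-absorbing prime, hence prime or defined over a local ring); as written, citing \cite{El-Baset} for unrestricted cancellation over an arbitrary faithful multiplication module is a gap. In fairness, this is the same lemma on which the cited Theorem 5 of \cite{Ugurlu} rests, so your proof is no less complete than the source the paper relies on --- but it is the one step a referee should ask you to repair.
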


\begin{proof}Apply (\cite{Ugurlu}, Theorem 5) on the $R_{e}$-module $M_{g}$.
\end{proof}

\begin{thm}\label{Corollary 4}Let $M$ be a $G$-graded $R$-module and $g\in G$ such that $M_{g}$ is a faithful multiplication $R_{e}$-module. If $I$ is a graded $1$-absorbing prime ideal of $R$ and $(IM)_{g}\neq M_{g}$, then $IM$ is a $g$-$1$-absorbing prime $R$-submodule of $M$.
\end{thm}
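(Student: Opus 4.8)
The plan is to transfer the whole problem down to the identity component $R_{e}$ and the faithful multiplication $R_{e}$-module $M_{g}$, where Proposition \ref{Theorem 5} does the substantive work. First I would record the structural facts: by Lemma \ref{1}(3), $IM$ is a graded $R$-submodule of $M$, and the standing hypothesis $(IM)_{g}\neq M_{g}$ is exactly the condition that makes ``$g$-$1$-absorbing prime'' meaningful for $IM$. Next, since $I$ is a graded $1$-absorbing prime ideal of $R$, Proposition \ref{6} gives that $I_{e}$ is a (proper) $1$-absorbing prime ideal of $R_{e}$; and because $M_{g}$ is a faithful multiplication $R_{e}$-module, Proposition \ref{Theorem 5} applies verbatim to the pair $(I_{e},M_{g})$.

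Now take non-units $x,y\in R_{e}$ and $m\in M_{g}$ with $xym\in IM$. Since $x,y\in R_{e}$ and $m\in M_{g}$, the product $xym$ is homogeneous of degree $g$, so $xym\in IM\cap M_{g}=(IM)_{g}$, using that $IM$ is graded. The crucial reduction is to identify this graded component as $(IM)_{g}=I_{e}M_{g}$. Granting this, Proposition \ref{Theorem 5} applied to the $1$-absorbing prime ideal $I_{e}$ yields either $xy\in I_{e}$ or $m\in I_{e}M_{g}$. In the first case $xy\in I_{e}\subseteq I$, hence $xyM\subseteq IM$, i.e. $xy\in (IM:_{R}M)$; in the second case $m\in I_{e}M_{g}\subseteq IM$. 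In both cases the defining condition of a $g$-$1$-absorbing prime $R$-submodule is verified, which finishes the argument.

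The main obstacle is precisely the claim $(IM)_{g}=I_{e}M_{g}$. The inclusion $I_{e}M_{g}\subseteq (IM)_{g}$ is immediate. For the reverse inclusion the natural route is to write $(IM)_{g}=TM_{g}$ with $T=((IM)_{g}:_{R_{e}}M_{g})$, which is legitimate because $M_{g}$ is a multiplication module, and then to show $T\subseteq I_{e}$, that is, every $r\in R_{e}$ with $rM_{g}\subseteq IM$ already lies in $I$. Here both hypotheses on $M_{g}$ should enter: faithfulness supplies the cancellation property that lets one pass between ideals of $R_{e}$ and submodules of $M_{g}$, while the assumption $(IM)_{g}\neq M_{g}$ forces $T$ to be a proper ideal and so rules out the degenerate case $T=R_{e}$. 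The delicate point is the graded expansion $(IM)_{g}=\sum_{h\in G}I_{h}M_{h^{-1}g}$: one must show that the off-diagonal terms $\sum_{h\neq e}I_{h}M_{h^{-1}g}$ are absorbed into $I_{e}M_{g}$ under the faithful multiplication hypothesis. Controlling these terms is the step I expect to require the most care, and it is where the faithful-multiplication and properness assumptions are genuinely used.
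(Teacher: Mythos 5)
Your argument is essentially the paper's own proof: gradedness of $IM$ via Lemma \ref{1}(3), Proposition \ref{6} to obtain the $1$-absorbing prime ideal $I_{e}$ of $R_{e}$, the observation that $xym$ is homogeneous of degree $g$, and then Proposition \ref{Theorem 5} applied to the pair $(I_{e},M_{g})$, with the same two-case endgame. The one place you part company is the identity $(IM)_{g}=I_{e}M_{g}$: you correctly isolate it as the crux and admit you have not proved it, whereas the paper simply writes $xym=(xym)_{g}\in(IM)_{g}=I_{e}M_{g}$ with no justification at all. Note that only the inclusion $(IM)_{g}\subseteq I_{e}M_{g}$ is actually needed (to place $xym$ where Proposition \ref{Theorem 5} can see it), and that is the nontrivial direction: in general $(IM)_{g}=\sum_{h\in G}I_{h}M_{h^{-1}g}$, and nothing in the hypotheses obviously forces the cross terms $I_{h}M_{h^{-1}g}$ with $h\neq e$ into $I_{e}M_{g}$, since the faithful-multiplication assumption constrains $M_{g}$ alone and says nothing about the other components $M_{h^{-1}g}$. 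So your diagnosis is sound and your proposed route through $T=\left((IM)_{g}:_{R_{e}}M_{g}\right)$ is a reasonable attempt, but as written your proof is incomplete at exactly the point where the paper's proof is also unsupported; to make either argument rigorous one must actually establish $(IM)_{g}\subseteq I_{e}M_{g}$, or restrict to situations (for instance $I$ generated by its identity component) where that inclusion is automatic.
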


\begin{proof}By Lemma \ref{1} (3), $IM$ is a graded $R$-submodule of $M$. Since $I$ is a graded $1$-absorbing prime ideal of $R$, by Proposition \ref{6}, $I_{e}$ is a $1$-absorbing prime ideal of $R_{e}$. Let $x, y\in R_{e}$ be non-units and $m\in M_{g}$ such that $xym\in IM$. Then $xym=x_{e}y_{e}m_{g}=(xym)_{g}\in (IM)_{g}=I_{e}M_{g}$. By Proposition \ref{Theorem 5}, either $xy\in I_{e}$ or $m\in I_{e}M_{g}\subseteq IM$. If $xy\in I_{e}$, then $xy\in I$ and then $xyM\subseteq IM$, which implies that $xy\in (IM:_{R}M)$. Hence, $IM$ is a $g$-$1$-absorbing prime $R$-submodule of $M$.
 \end{proof}

\begin{prop}\label{Proposition 8} Let $M$ be a graded $R$-module and $N$ be a graded $1$-absorbing prime $R/Ann_{R}(M)$-submodule of $M$ as $M$ is a graded $R/Ann_{R}(M)$-module. If $U(R/Ann_{R}(M))=\left\{r+Ann_{R}(M):r\in U(R)\right\}$, then $N$ is a graded $1$-absorbing prime $R$-submodule of $M$ as $M$ is a graded $R$-module.
\end{prop}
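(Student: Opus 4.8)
The plan is to move all the data across the canonical surjection $\pi\colon R\to\overline{R}:=R/Ann_{R}(M)$ and exploit that $N$ is already known to be graded $1$-absorbing prime over $\overline{R}$. Three preliminary observations make the transfer routine. First, $Ann_{R}(M)$ is a graded ideal (as recorded just after Lemma \ref{1}), so $\overline{R}$ inherits a $G$-grading with $\overline{R}_{g}=(R_{g}+Ann_{R}(M))/Ann_{R}(M)$, and if $x\in R_{g}$ then $\overline{x}=x+Ann_{R}(M)\in\overline{R}_{g}$; thus homogeneous elements of $R$ have homogeneous images. Second, $R$ acts on $M$ through $\pi$, so $xm=\overline{x}\,m$ for all $x\in R$, $m\in M$; in particular $xym=\overline{x}\,\overline{y}\,m$. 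Third, the two residual ideals match up: for $z\in R$ one has $z\in(N:_{R}M)\iff zM\subseteq N\iff\overline{z}M\subseteq N\iff\overline{z}\in(N:_{\overline{R}}M)$. These let me pass each hypothesis and each desired conclusion back and forth freely.

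With this in place, I would take non-unit homogeneous $x,y\in h(R)$ and $m\in h(M)$ with $xym\in N$, and consider $\overline{x},\overline{y}\in h(\overline{R})$. In the principal case, where both $\overline{x}$ and $\overline{y}$ are non-units in $\overline{R}$, the argument closes at once: since $\overline{x}\,\overline{y}\,m=xym\in N$ and $N$ is graded $1$-absorbing prime over $\overline{R}$, either $\overline{x}\,\overline{y}\in(N:_{\overline{R}}M)$ or $m\in N$; in the first alternative the colon-ideal correspondence gives $xy\in(N:_{R}M)$, and the second is already one of the required conclusions. So on this branch $N$ satisfies the defining condition over $R$.

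The entire difficulty is concentrated in reducing to that principal case, i.e.\ in showing that the non-units $x,y$ of $R$ have non-unit images $\overline{x},\overline{y}$ in $\overline{R}$, and this is exactly the step that must consume the hypothesis $U(\overline{R})=\{r+Ann_{R}(M):r\in U(R)\}$. The easy inclusion $\pi(U(R))\subseteq U(\overline{R})$ holds for any ring map, so the hypothesis is really a converse: every unit of $\overline{R}$ is hit by a unit of $R$. I would attempt the contrapositive---assume $\overline{x}\in U(\overline{R})$, lift it via the hypothesis to $\overline{x}=\overline{u}$ with $u\in U(R)$, and try to conclude that $x$ itself is a unit, contradicting the choice of $x$. \textbf{This is the step I expect to be the main obstacle}, and it is genuinely delicate: $\overline{x}=\overline{u}$ only forces $x\equiv u\pmod{Ann_{R}(M)}$, not $x=u$, so passing from ``$x$ and a unit have the same image'' to ``$x$ is a unit'' is not automatic and can fail when $Ann_{R}(M)$ is large. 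If this inference cannot be secured, the remaining case---a non-unit $x$ with $\overline{x}$ a unit, where applying $\overline{x}^{-1}$ to $xym=\overline{x}\,\overline{y}\,m\in N$ yields only $ym\in N$---does not follow from the $1$-absorbing prime property alone, since that property controls products of \emph{two} non-units and yields no prime-type conclusion from the single relation $\overline{y}\,m\in N$. I would therefore treat the control of units as the crux on which the whole proposition turns and scrutinize it carefully before trusting the reduction.
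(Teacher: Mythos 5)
Your analysis is on target, and the obstacle you isolate is not a defect of your approach but of the proposition itself. Your ``principal case'' is exactly the paper's entire proof: the paper takes non-units $x,y\in h(R)$ and simply asserts that $x+Ann_{R}(M)$ and $y+Ann_{R}(M)$ are non-units in $R/Ann_{R}(M)$, then applies the $1$-absorbing prime hypothesis over the quotient and transfers the colon ideal back, precisely as you do. The justification for that assertion is omitted, and as you observe, the stated hypothesis $U(R/Ann_{R}(M))=\{r+Ann_{R}(M):r\in U(R)\}$ does not supply it: knowing that a unit $\overline{x}$ lifts to \emph{some} unit $u$ only gives $x\equiv u\pmod{Ann_{R}(M)}$, which does not make $x$ itself a unit.

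Your suspicion that the remaining case cannot be closed is also correct: the proposition is false as stated. Take $G=\mathbb{Z}$, $R=\mathbb{Z}$ concentrated in degree $0$, and $M=\mathbb{Z}_{4}[t]$ graded by $M_{j}=\mathbb{Z}_{4}t^{j}$ as in Example \ref{Example 2}, so that $Ann_{R}(M)=4\mathbb{Z}$ and $R/Ann_{R}(M)=\mathbb{Z}_{4}$. The hypothesis holds, since $U(\mathbb{Z}_{4})=\{\overline{1},\overline{3}\}=\{\overline{r}:r\in U(\mathbb{Z})\}$. By Examples \ref{Example 1} and \ref{Example 2}, $N=\mathbb{Z}_{4}t$ is a graded $1$-absorbing prime $\mathbb{Z}_{4}$-submodule of $M$. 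But over $R=\mathbb{Z}$ the non-units $x=2$, $y=3$ and the homogeneous element $m=\overline{2}\in M_{0}$ satisfy $xym=\overline{12}=0\in N$, while $xy=6\notin(N:_{\mathbb{Z}}M)=4\mathbb{Z}$ and $m\notin N$; here $\overline{3}$ is a unit of $\mathbb{Z}_{4}$ even though $3$ is not a unit of $\mathbb{Z}$, which is exactly the failure mode you predicted. The statement (and the paper's proof) becomes correct if the hypothesis is strengthened to $U(R)=\{r\in R:r+Ann_{R}(M)\in U(R/Ann_{R}(M))\}$, i.e.\ $r$ is a unit of $R$ if and only if its image is a unit of the quotient; under that hypothesis your principal case is the only case and your argument is complete.
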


\begin{proof}Let $x, y\in h(R)$ be non-units and $m\in h(M)$ such that $xym\in N$. Then $x+Ann_{R}(M), y+Ann_{R}(M)\in h(R/Ann_{R}(M))$ are non-units such that $(x + Ann_{R}(M))(y +Ann_{R}(M))m = xym+Ann_{R}(M)m\in N$. Since $N$ is a graded $1$-absorbing prime $R/Ann_{R}(M)$-submodule of $M$, either $xy + Ann_{R}(M)\in (N :_{R/Ann_{R}(M)} M)$ or $m\in N$. If $xy + Ann_{R}(M)\in (N :_{R/Ann_{R}(M)} M)$, then $xyM\subseteq N$, which implies that $xy\in (N :_{R} M)$. Hence, $N$ is a graded $1$-absorbing prime $R$-submodule of $M$ as $M$ is a graded $R$-module.
\end{proof}

\begin{thm}\label{Theorem 6}Let $M$ be a $G$-graded $R$-module and $g\in G$ such that $M_{g}$ is a multiplication $R_{e}$-module and $U(R_{e}/Ann_{R_{e}}(M_{g}))=\left\{r+Ann_{R_{e}}(M_{g}):r\in U(R_{e})\right\}$. Assume that $I$ is a graded $1$-absorbing prime ideal of $R$ containing $Ann_{R}(M)$. Then $(I_{e}/Ann_{R_{e}}(M_{g}))M_{g}$ is a $g$-$1$-absorbing prime $R$-submodule of $M$.
\end{thm}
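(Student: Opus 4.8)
The plan is to imitate the proof of Theorem \ref{Corollary 4}, repairing the loss of faithfulness by passing to the quotient ring $\overline{R_{e}}:=R_{e}/Ann_{R_{e}}(M_{g})$, over which $M_{g}$ \emph{is} a faithful multiplication module, and then to descend the conclusion back to $R_{e}$ (hence to $R$) by means of the unit hypothesis, exactly in the spirit of Proposition \ref{Proposition 8}. Write $A=Ann_{R_{e}}(M_{g})$ and $\overline{r}=r+A$ for $r\in R_{e}$. First I would record that $M_{g}$ is a faithful multiplication $\overline{R_{e}}$-module: faithfulness is immediate from the definition of $A$, while the multiplication property is inherited because the $R_{e}$-submodules of $M_{g}$ and the scalar action on $M_{g}$ are unchanged upon reduction modulo $A$. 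Since $A$ acts as zero on $M_{g}$, one has $\overline{I}\,M_{g}=I_{e}M_{g}$, where $\overline{I}=(I_{e}+A)/A$ is the image of $I_{e}$ in $\overline{R_{e}}$; thus the submodule named in the statement is $N=\overline{I}\,M_{g}$.

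The next step is to promote the $1$-absorbing prime property to the quotient. By Proposition \ref{6}, $I_{e}$ is a $1$-absorbing prime ideal of $R_{e}$, and I would then argue that $\overline{I}$ is a $1$-absorbing prime ideal of $\overline{R_{e}}$. Given non-units $\overline{x},\overline{y},\overline{z}\in\overline{R_{e}}$ with $\overline{x}\,\overline{y}\,\overline{z}\in\overline{I}$, the hypothesis $U(\overline{R_{e}})=\{\,r+A:\,r\in U(R_{e})\,\}$ is what guarantees that $x,y,z$ may be taken to be non-units of $R_{e}$ (a unit lift of a non-unit is impossible under this hypothesis), and the $1$-absorbing prime property of $I_{e}$ then forces $\overline{x}\,\overline{y}\in\overline{I}$ or $\overline{z}\in\overline{I}$. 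Properness of $N$ (i.e. $N\neq M_{g}$) is also checked here: it follows from properness of $\overline{I}$ together with the multiplication property of $M_{g}$, and properness of $\overline{I}$ in turn rests on properness of the $1$-absorbing prime ideal $I_{e}$ and, again, on the unit hypothesis.

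With $M_{g}$ a faithful multiplication $\overline{R_{e}}$-module and $\overline{I}$ a $1$-absorbing prime ideal of $\overline{R_{e}}$, Proposition \ref{Theorem 5} applies over $\overline{R_{e}}$: for non-units $\overline{x},\overline{y}\in\overline{R_{e}}$ and $m\in M_{g}$ with $\overline{x}\,\overline{y}\,m\in\overline{I}\,M_{g}=N$, either $\overline{x}\,\overline{y}\in\overline{I}$ or $m\in N$. Finally I would descend to $M$ along the lines of the last part of the proof of Theorem \ref{Corollary 4}: taking non-units $x,y\in R_{e}$ and $m\in M_{g}$ with $xym\in N$, their residues $\overline{x},\overline{y}$ are again non-units by the unit hypothesis, so the dichotomy just obtained gives $m\in N$ (one of the desired alternatives) or $\overline{x}\,\overline{y}\in\overline{I}$. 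In the latter case $xy\in I_{e}+A$, whence $xy\,M_{g}\subseteq I_{e}M_{g}=N$; unwinding this exactly as in Theorem \ref{Corollary 4} yields the required membership $xy\in(N:_{R}M)$ at the level of the component $M_{g}$, which completes the verification that $N$ is a $g$-$1$-absorbing prime $R$-submodule of $M$.

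I expect the genuine difficulty to lie in the second step, the passage of the $1$-absorbing prime property from $I_{e}$ down to $\overline{I}$ in $\overline{R_{e}}$. The delicate point is that $A$ need not be contained in $I_{e}$, so $\overline{I}$ is the \emph{image} $(I_{e}+A)/A$ rather than an honest quotient of $I_{e}$; the only thing that rescues the argument is the hypothesis $U(\overline{R_{e}})=\{\,r+A:\,r\in U(R_{e})\,\}$, which controls the correspondence of non-units across the projection $R_{e}\rightarrow\overline{R_{e}}$ and is used in \emph{both} directions, once to lift and once to descend. This is precisely the mechanism already isolated in Proposition \ref{Proposition 8}, and the properness of $N$ should be confirmed rather than taken for granted, since—unlike in Theorem \ref{Corollary 4}—it is not imposed in the hypotheses.
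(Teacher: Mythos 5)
Your proposal is correct and follows essentially the same route as the paper: reduce to $M_{g}$ as a faithful multiplication module over $R_{e}/Ann_{R_{e}}(M_{g})$, transfer the $1$-absorbing prime property of $I_{e}$ to its image in the quotient, apply the multiplication-module dichotomy (Proposition \ref{Theorem 5} via Theorem \ref{Corollary 4}), and descend back to $R$. The only difference is that the paper delegates the two transfer steps to citations (\cite{Ugurlu}, Propositions 7 and 8) and to Theorem \ref{Corollary 4}, whereas you carry them out inline and, usefully, make explicit the points the paper leaves implicit --- that $Ann_{R_{e}}(M_{g})$ need not lie in $I_{e}$, so one must work with the image ideal, and that properness of the resulting submodule requires verification.
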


\begin{proof}Since $M_{g}$ is a multiplication $R_{e}$-module, $M_{g}$ is a faithful multiplication $R_{e}/Ann_{R_{e}}(M_{g})$-module (see \cite{El-Baset}, page 759). Also, since $I$ is a graded $1$-absorbing prime ideal of $R$, $I_{e}$ is a $1$-absorbing prime ideal of $R_{e}$ by Proposition \ref{6}, and then $I_{e}/Ann_{R_{e}}(M_{g})$ is a $1$-absorbing prime ideal of $R_{e}/Ann_{R_{e}}(M_{g})$ by (\cite{Ugurlu}, Proposition 7). So, $(I_{e}/Ann_{R_{e}}(M_{g}))M_{g}$ is a $g$-$1$-absorbing prime $R_{e}/Ann_{R_{e}}(M_{g})$-submodule of $M$ by Theorem \ref{Corollary 4}, which implies that $(I_{e}/Ann_{R_{e}}(M_{g}))M_{g}$ is a $g$-$1$-absorbing prime $R$-submodule of $M$ by (\cite{Ugurlu}, Proposition 8).
\end{proof}


\begin{thebibliography}{9}

\bibitem{Dawwas Bataineh} R. Abu-Dawwas, M. Bataineh and H. Shashan, Graded generalized $2$-absorbing submodules, Beitr\"{a}ge zur Algebra und Geometrie / Contributions to Algebra and Geometry, (2020), https://doi.org/10.1007/s13366-020-00544-1.

\bibitem{Dawwas Tekir} R. Abu-Dawwas, \"{U}. Tekir, S. Ko\c{c} and E. Y{\i}ld{\i}z, Graded $1$-absorbing prime ideals, accepted in the Sao Paulo Journal of Mathematical Sciences, in press.

\bibitem{Zoubi Dawwas} K. Al-Zoubi and R. Abu-Dawwas, On graded $2$-absorbing and weakly graded $2$-absorbing submodules, Journal of Mathematical Sciences: Advances and Applications, 28 (2014), 45-60.

\bibitem{Zoubi Dawwas Ceken} K. Al-Zoubi, R. Abu-Dawwas and S. \c{C}eken, On graded 2-absorbing and graded weakly 2-absorbing ideals, Hacettepe Journal of Mathematics and Statistics, 48 (3) (2019), 724-731.

\bibitem{Zoubi Qarqaz} K. Al-Zoubi and F. Qarqaz, An intersection condition for graded prime ideals, Bollettino dell'Unione Matematica Italiana, 11 (2018), 483-488.

\bibitem{Atani} S. E. Atani, On graded prime submodules, Chiang Mai Journal of Science, 33 (1) (2006), 3-7.

\bibitem{Atani Saraei} S. E. Atani and F. E. K. Saraei, Graded modules which satisfy the gr-radical formula, Thai Journal of Mathematics, 8 (1) (2010), 161-170.

\bibitem {Badawi Celikel} A. Badawi and E. Y. Celikel, On $1$-absorbing primary ideals of commutative rings, Journal of Algebra and Its Applications, 19 (6), 2050111 (2020)

\bibitem{El-Baset} Z. A. El-Bast and P. F. Smith, Multiplication modules, Communications in Algebra, 16 (1988), 755-779.

\bibitem{Farzalipour} F. Farzalipour, P. Ghiasvand, On the union of graded prime submodules, Thai Journal of Mathematics, 9 (1) (2011), 49-55.

\bibitem{Hamoda} M. Hamoda and A. E. Ashour, On graded $n$-ansorbing submodules, Le Matematiche, (2015), 243-254, doi:10.4418/2015.70.2.16.

\bibitem{Hazart} R. Hazrat, Graded rings and graded Grothendieck groups, Cambridge University press, 2016.

\bibitem{Naghani Moghimi} S. R. Naghani and H. F. Moghimi, On graded $2$-absorbing and graded weakly $2$-absorbing ideals of a commutative ring, \c{C}ankaya University Journal of Science and Engineering, 13 (2) (2016), 11-17.

\bibitem{Nastasescue}  C. Nastasescu and F. Oystaeyen, Methods of graded rings, Lecture Notes in Mathematics, 1836, Springer-Verlag, Berlin, 2004.

\bibitem{Refai Zoubi} M. Refai and K. Al-Zoubi, On graded primary ideals, Turkish Journal of Mathematics, 28 (2004), 217-229.

\bibitem{Refai Hailat Obiedat} M. Refai, M. Hailat and S. Obiedat, Graded radicals and graded prime spectra, Far East Journal of Mathematical Sciences, (2000), 59-73.

\bibitem{Ugurlu} E. A. Ugurlu, On $1$-absorbing prime submodules, (2020), arXiv:2007.01103v1.

\bibitem{Yassine} A. Yassine, M. J. Nikmehr, R. Nikandish, On $1$-absorbing prime ideals of commutative rings, Journal of Algebra and its Applications, (2020), doi:10.1142/S0219498821501759.

\end{thebibliography}
\end{document}